\definecolor{forestgreen(traditional)}{rgb}{0.0, 0.27, 0.13}
\definecolor{forestgreen(web)}{rgb}{0.13, 0.55, 0.13}
\definecolor{airforceblue}{rgb}{0.36, 0.54, 0.66}
\newcommand{\symdiff}{\triangle}
\newcommand{\cB}{\mathcal{B}}
\newcommand{\bZ}{\mathbb{Z}}
\newcommand{\lift}{\mathrm{lift}}
\newenvironment{subproof}[1][\proofname]{%
  \begin{proof}[#1]%
}{%
  \end{proof}%
}
\newtheorem{theorem}{Theorem}[section]
\newtheorem{lemma}[theorem]{Lemma}
\newtheorem{claim}{Claim}
\newtheorem{corollary}[theorem]{Corollary}
\newtheorem{proposition}[theorem]{Proposition}
\theoremstyle{definition}
\theoremstyle{remark}
\title{Note on Hamiltonicity of basis graphs of even delta-matroids}
\author[1,2]{Donggyu Kim\thanks{Supported by the Institute for Basic Science (IBS-R029-C1).}}
\author[$*$2]{Sang-il Oum}
\affil[1]{Department of Mathematical Sciences, KAIST, Daejeon, South~Korea}
\affil[2]{Discrete Mathematics Group,
Institute for Basic Science (IBS),
Daejeon,~South~Korea}
\affil[ ]{Email: \texttt{donggyu@kaist.ac.kr}, \texttt{sangil@ibs.re.kr}}
\date{\today}	
\begin{document}
\maketitle
\begin{abstract}
    We show that the basis graph of an even delta-matroid is Hamiltonian if it has more than two vertices.
    More strongly, we prove that for two distinct edges $e$ and $f$ sharing a common end, it has a Hamiltonian cycle using $e$ and avoiding $f$ unless it has at most two vertices or it is a cycle of length at most four.
    We also prove that if the basis graph is not a hypercube graph, then each vertex belongs to cycles of every length $\ell\ge 3$, and each edge belongs to cycles of every length $\ell \ge 4$.
    For the last theorem, we provide two proofs, one of which uses the result of Naddef (1984) on polytopes and the result of Chepoi (2007) on basis graphs of even delta-matroids, and the other is a direct proof using various properties of even delta-matroids.
    Our theorems generalize the analogous results for matroids by Holzmann and Harary (1972) and Bondy and Ingleton (1976).
\end{abstract}

\section{Introduction}

For two sets $X$ and $Y$, we write $X\symdiff Y := (X\setminus Y) \cup (Y\setminus X)$.
A \emph{delta-matroid} is a pair $(E,\cB)$ of a finite set $E$ and a nonempty set $\cB$ of subsets of $E$ satisfying the \emph{symmetric exchange axiom}:
\begin{enumerate}[label=\rm(SEA)]
    \item\label{sea} if $B,B' \in \cB$ and $e\in B\symdiff B'$, then there is $f \in B\symdiff B'$ such that $B \symdiff \{e,f\} \in \cB$.
\end{enumerate}
We call each member of $\cB$ a \emph{base}.
An \emph{even} delta-matroid is a delta-matroid whose bases have cardinalities of the same parity.

Delta-matroids were introduced independently by Bouchet~\cite{Bouchet1987}, Dress and Havel~\cite{DH1986}, and Chandrasekaran and Kabadi~\cite{CK1988} under various names.
They capture several combinatorial properties of both matroids and Eulerian circuits of $4$-regular graphs.
Here we review examples of even delta-matroids:
\begin{description}
    \item[Matroids.]
    A matroid is precisely a delta-matroid whose bases have the same cardinality.
    \item[Representable even delta-matroids.] 
    For an $E\times E$ skew-symmetric matrix $A$ and a subset $X \subseteq E$, all subsets $Y\subseteq E$ with $\det(A[X\symdiff Y]) \ne 0$ form the bases of an even delta-matroid on $E$~\cite{Bouchet1988}.
    Here $A[X]$ means the $X\times X$ submatrix of $A$.
    \item[Eulerian even delta-matroids.]
    Let $G$ be a connected directed graph such that each vertex has in-degree and out-degree $2$.
    For each vertex $v$, we denote two incoming edges by $i^1_v,i^2_v$ and two outgoing edges by $o^1_v,o^2_v$.
    We say a vertex set $X\subseteq V(G)$ is feasible if there is a directed Eulerian circuit $C$ such that for each $v\in X$, $C$ passes $v$ through edges $i^1_v$ and $o^1_v$, and for each $v\in V(G)\setminus X$, $C$ passes $v$ through edges $i^1_v$ and $o^2_v$.
    Then the feasible sets are the bases of an even delta-matroid on $V(G)$~\cite{Bouchet1987}.
    \item[Matching delta-matroids.] For a graph $G$, all vertex subsets inducing subgraphs with perfect matchings form the bases of an even delta-matroid on~$V(G)$~\cite{Bouchet1989b}.
    \item[Graphic delta-matroids]
    A graft is a pair of graph and its vertex subset.
    Let $(G,T)$ be a graft such that every component has a vertex in $T$.
    We say an edge set $F$ of $G$ is feasible if each component of the subgraph $(V(G),F)$ has odd number of vertices in $T$.
    Then the feasible sets are the bases of an even delta-matroid on $E(G)$~\cite{Oum2009}.
\end{description}
We refer to the survey by Moffat~\cite{Moffatt2019} for more examples of delta-matroids.

The \emph{basis graph}, denoted by $BG(M)$, of an even delta-matroid $M$ is a graph on the set of bases of $M$ such that two vertices $B$ and $B'$ are adjacent if and only if $|B\symdiff B'| = 2$.
Basis graphs were first introduced by Cummins~\cite{Cummins1966} for graphic matroids under the name ``tree graphs.''
Basis graphs of matroids were extensively studied in 1970s~\cite{BI1976,HH1972,HNT1973,Maurer1973,Maurer1973b}.
Holzmann, Norton, and Tobey~\cite{HNT1973} showed that basis graphs determine matroids up to isomorphism and component-wise dual.
Maurer~\cite{Maurer1973} provided the conditions deciding whether a given graph is the basis graph of a matroid.
He also showed that for any basis graph $G$ of a matroid, the $2$-complex obtained by gluing disks along all $3$- and $4$-cycles of $G$ is simply connected.
These results for basis graphs of matroids have been extended for those of even delta-matroids~\cite{CCO2015,Chepoi2007,Wenzel1996}.

In 1966, Cummins~\cite{Cummins1966} showed the following theorem for the basis graphs of graphic matroids.
\begin{theorem}[Cummins~\cite{Cummins1966}]\label{thm:Cummins}
    Let $H$ be the basis graph of a graphic matroid.
    Then every edge of $H$ is contained in a Hamiltonian cycle of $H$.
\end{theorem}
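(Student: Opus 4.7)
The plan is to argue by induction on $|E(G)|$, where $G$ is a connected graph with $M(G)$ the given graphic matroid. Let $e = \{T, T'\}$ be the edge of $H = BG(M(G))$ through which the Hamiltonian cycle should pass, with $T \symdiff T' = \{a, b\}$, $a \in T$ and $b \in T'$. I first reduce to the case that $G$ has no bridge and no loop, since these contribute nothing to $H$ and can be deleted or contracted; the remaining small cases, where $H$ is just an edge or a short cycle, are handled directly.

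For the inductive step, I would pick a \emph{pivot} edge $g \in E(G) \setminus \{a, b\}$ that is neither a bridge nor a loop, so that both $M(G) \setminus g = M(G \setminus g)$ and $M(G)/g = M(G/g)$ are graphic matroids on strictly fewer edges. The spanning trees of $G$ partition into $V_g$ (those containing $g$) and $V_{\bar g}$ (those not), and the induced subgraphs $H[V_g]$ and $H[V_{\bar g}]$ are precisely the basis graphs of $M(G)/g$ and $M(G \setminus g)$. Since $g \neq a, b$, the two trees $T$ and $T'$ lie on the same side of the partition, so by the inductive hypothesis applied to that side, there is a Hamiltonian cycle $C$ through the edge $\{T, T'\}$ within that side, and the inductive hypothesis applied to the other side yields a Hamiltonian cycle $C'$ through some edge.

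To merge $C$ and $C'$ into a Hamiltonian cycle of $H$ through $\{T, T'\}$, I would use the cross-edges between $V_g$ and $V_{\bar g}$: each such edge arises by swapping $g$ with an edge on the fundamental cycle (or fundamental cocycle) of $g$ in the corresponding tree. The merging step locates an edge $\{S_1, S_2\}$ of $C$ different from $\{T, T'\}$ and an edge $\{S_1', S_2'\}$ of $C'$ such that $\{S_1, S_1'\}$ and $\{S_2, S_2'\}$ are both cross-edges; replacing $\{S_1, S_2\}$ and $\{S_1', S_2'\}$ by the two crossings yields a single Hamiltonian cycle still containing $\{T, T'\}$.

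The main obstacle is ensuring that such a compatible pair of cross-edges always exists without touching $\{T, T'\}$. In general this forces one to strengthen the inductive statement — for instance, insisting on a Hamiltonian cycle through a prescribed edge while \emph{avoiding} another prescribed adjacent edge, or through two specified consecutive edges — so that the two Hamiltonian cycles being merged can be chosen to cooperate at the cross-edges. This is precisely the kind of reinforcement that the present paper extends from matroids to even delta-matroids, and choosing a workable strengthening that survives both inductive calls is the delicate part of the argument.
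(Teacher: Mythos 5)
Your skeleton --- induction on $|E(G)|$ via a pivot element $g$, the partition of spanning trees into those containing $g$ and those not, the identification of the two sides with $BG(M/g)$ and $BG(M\setminus g)$, and the merging of two Hamiltonian cycles through a straddling $4$-cycle --- is exactly the architecture used in this line of work, and in this paper's proof of the stronger Theorem~\ref{thm:Ham}. But as written the argument has a genuine gap, and you point at it yourself: you never establish that a compatible pair of cross-edges exists, i.e.\ an edge $S_1S_2$ of $C$ other than $\{T,T'\}$ together with an edge $S_1'S_2'$ of $C'$ such that $S_1S_1'$ and $S_2S_2'$ are both edges of $H$. This is not a routine verification; it is the heart of the proof, and saying that ``one must strengthen the inductive statement'' without exhibiting a strengthening that closes on itself leaves the induction unfounded. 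Concretely, three things are needed: (a) a $4$-cycle of $H$ with one edge on each side of the partition (in the paper this is the Claim inside the proof of Theorem~\ref{thm:main2}, proved using~\ref{sea'} and the circuit-orthogonality Lemma~\ref{lem:ortho}); (b) control over which side of the partition retains a component of size at least three, so that the inductive call on that side returns a cycle through a \emph{prescribed} edge rather than merely some Hamiltonian cycle (the paper gets this from the Tutte--Bouchet connectivity reduction, Theorem~\ref{thm: connectivity reduction}); and (c) a precise reinforced hypothesis --- in the paper, pancyclicity versus even pancyclicity of specified edges, plus Hamiltonian paths with prescribed endpoints --- against which both inductive calls can be checked.

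There are also unaddressed degenerate situations. One side of the partition may have only one or two vertices (too few spanning trees on that side to contain any cycle), in which case there is no Hamiltonian cycle $C'$ to merge and you must instead splice those vertices into $C$ by cross-edges to consecutive vertices of $C$, again avoiding $\{T,T'\}$. A pivot $g\in E(G)\setminus\{a,b\}$ that is neither a bridge nor a loop need not exist after your initial reduction, and the cases $H\cong K_1$ or $K_2$ show the statement needs $|V(H)|\ge 3$ to be read literally. None of these is fatal, but each requires an argument you have not supplied. For comparison, the paper does not reprove Cummins's theorem from scratch: it deduces it (and more) from Theorem~\ref{thm:Ham} for even delta-matroids, whose proof supplies precisely the missing ingredients (a)--(c) above.
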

In 1972, Holzmann and Harary~\cite{HH1972} extended Theorem~\ref{thm:Cummins} for general matroids as follows.

\begin{theorem}[Holzmann and Harary~\cite{HH1972}]\label{thm:HH}
    Let $M$ be a matroid and let $e$ be an edge of $BG(M)$.
    Then $BG(M)$ has a Hamiltonian cycle using $e$ if $BG(M)$ contains a cycle, and $BG(M)$ has a Hamiltonian cycle avoiding $e$ if $BG(M)$ contains at least two cycles.
\end{theorem}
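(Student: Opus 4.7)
The plan is to prove the two conclusions by simultaneous induction on $|E(M)|$. After removing loops and coloops of $M$ (which do not affect $BG(M)$), I would pick any ground set element $a$ and partition the bases of $M$ into $\cB_a=\{B:a\in B\}$ and $\cB_{\bar a}=\{B:a\notin B\}$. The induced subgraphs of $BG(M)$ on these two parts are isomorphic to $BG(M/a)$ and $BG(M\setminus a)$, and the crossing edges correspond to swaps of the form $B\mapsto B-a+b$. The symmetric exchange axiom guarantees that every vertex in $\cB_a$ has at least one neighbor in $\cB_{\bar a}$ and vice versa, so there is a rich supply of crossing edges to use when assembling a Hamiltonian cycle.

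Fix the prescribed edge $e=\{B,B'\}$ with $B'=B-x+y$. I would choose $a$ so that $e$ sits in a convenient position: for the ``through $e$'' statement, take $a$ to be a common element of $B$ and $B'$ if possible (so that $e$ lies inside $BG(M/a)$), and for the ``avoiding $e$'' statement choose $a\in\{x,y\}$ so that $e$ itself becomes a crossing edge. Applying the inductive hypothesis on each side yields Hamiltonian cycles $C_1$ of $BG(M/a)$ and $C_2$ of $BG(M\setminus a)$ with the desired incidence/avoidance properties relative to $e$ on whichever side contains it. I would then delete a single edge from each of $C_1$ and $C_2$ so that the endpoints of the resulting Hamiltonian paths can be joined by two crossing edges, producing a Hamiltonian cycle of $BG(M)$ that uses or avoids $e$ as required. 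Iterating induction on the other side (if $e$ sits inside one part), or applying it with freedom on both sides (if $e$ is a crossing edge), covers all configurations.

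The principal obstacle will be verifying that the inductive hypothesis is actually usable on both sides. Either $BG(M/a)$ or $BG(M\setminus a)$ may consist of a single vertex, an edge, a short path, or a short cycle, in which case the hypothesis supplies nothing useful and I would have to construct the relevant Hamiltonian paths by hand and then merge them with crossing edges. Choosing $a$ adaptively handles many such problems, but configurations where $M$ decomposes into very small summands (so that $BG(M)$ is close to a hypercube, or is itself a single short cycle) demand a case-by-case verification; this is also precisely where the hypotheses ``$BG(M)$ contains a cycle'' and ``$BG(M)$ contains at least two cycles'' are needed to rule out the degenerate basis graphs. The auxiliary combinatorial claim that the bipartite graph of crossing edges is rich enough to supply two distinct disjoint crossing edges whenever both parts are nontrivial is a second potential sticking point and must be proved using the exchange axiom before the gluing step goes through. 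Once these corner cases are dispatched, the induction carries cleanly.
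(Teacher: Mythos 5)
Your decomposition of $\cB(M)$ by membership of an element $a$, identifying the two induced subgraphs with $BG(M/a)$ and $BG(M\setminus a)$ and gluing Hamiltonian paths with two crossing edges, is exactly the skeleton used in this area; note, though, that the paper does not reprove Theorem~\ref{thm:HH} at all — it cites Holzmann and Harary and instead proves the stronger Theorem~\ref{thm:Ham} (a Hamiltonian cycle through $\alpha$ avoiding $\beta$) for even delta-matroids, whose proof uses precisely this $M/e$ versus $M\setminus e$ split, with the Hamiltonian paths on each side supplied by the edge-pancyclicity Theorem~\ref{thm:main2}.

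The proposal, however, leaves the load-bearing step unproved. To glue you need two vertex-disjoint crossing edges $u_1u_2$ and $v_1v_2$ such that $u_1v_1$ and $u_2v_2$ are edges of the Hamiltonian cycles obtained on the two sides — equivalently, a $4$-cycle straddling the partition together with Hamiltonian cycles of each part through the two prescribed straddling edges. The exchange axiom gives many crossing edges, but it does not by itself produce a straddling $4$-cycle, let alone one avoiding $e$ when $e$ is a crossing edge; this is where the substance lies, and the paper spends an entire Claim inside the proof of Theorem~\ref{thm:main2} constructing such a configuration via the strong exchange axiom~\ref{sea'} and the circuit-orthogonality Lemma~\ref{lem:ortho}. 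A second problem is that your inductive hypothesis is too weak for your own case analysis: when you place $e$ inside $BG(M/a)$ for the ``through $e$'' half, the glued construction requires a Hamiltonian cycle of that part through \emph{two} prescribed edges ($e$ and the gluing edge $u_1v_1$), which Theorem~\ref{thm:HH} does not supply; you would either have to make $e$ a crossing edge in this half as well, or strengthen the induction to something like edge-pancyclicity or the joint through-and-avoid statement — which is what the paper actually proves. Finally, the degenerate cases (a side that is a single vertex, a single edge, or a $4$-cycle, and the near-hypercube decompositions) are flagged but not dispatched, and these are exactly where the hypotheses about $BG(M)$ containing one or two cycles must be invoked. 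As written, the proposal is a correct plan with the central combinatorial lemma and the strengthened inductive statement still missing.
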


Here is our first theorem.

\begin{theorem}\label{thm:Ham}
    Let $M$ be an even delta-matroid and let $\alpha$ and $\beta$ be distinct edges of $BG(M)$ sharing a common end.
    Then $BG(M)$ has a Hamiltonian cycle using $\alpha$ and avoiding $\beta$ unless it is isomorphic to $C_4$ or $K_m$ with $m\in\{1,2,3\}$.
\end{theorem}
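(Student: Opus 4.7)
The proof would proceed by induction on $|\cB|$, combining the twist operation of delta-matroids with a deletion-contraction style decomposition of $BG(M)$ and a cycle-splicing argument. As normalization, let $v$ be the common endpoint of $\alpha$ and $\beta$. Since the map $X\mapsto X\symdiff v$ induces an isomorphism $BG(M)\cong BG(M\symdiff v)$, I would replace $M$ by its twist and assume $v=\emptyset\in\cB$; then $\alpha$ and $\beta$ correspond to distinct two-element bases $A$ and $B$, and the goal becomes finding a Hamiltonian cycle of $BG(M)$ using the edge $\{\emptyset,A\}$ but not the edge $\{\emptyset,B\}$. Base cases with $|\cB|$ small would be handled by enumerating the possible basis graphs of even delta-matroids of that size, using the symmetric exchange axiom to rule out problematic candidates such as $K_{1,3}$.

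For the inductive step, I would choose an element $e\in E$ lying in some but not all bases (such $e$ exists whenever $|\cB|\ge 2$) and partition $\cB=\cB_0\sqcup\cB_1$ according to whether $e\notin X$ or $e\in X$. Then $M_0:=(E\setminus e,\cB_0)$ and $M_1:=(E\setminus e,\{X\setminus e:X\in\cB_1\})$ are even delta-matroids with strictly fewer bases than $M$, and $BG(M)$ decomposes as the disjoint union of $BG(M_0)$ and $BG(M_1)$ together with a bipartite set of cross edges corresponding to $\{e,f\}$-exchanges. Applying the induction hypothesis to $BG(M_0)$ and $BG(M_1)$ would yield Hamiltonian cycles $C_0,C_1$ with the required edge constraints; these would then be spliced into a Hamiltonian cycle of $BG(M)$ by deleting an edge from each $C_i$ and inserting a matched pair of cross edges. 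When $M$ decomposes as a direct sum $M_1\oplus M_2$ into nontrivial factors, I would instead exploit the Cartesian product structure $BG(M)\cong BG(M_1)\square BG(M_2)$ and construct the Hamiltonian cycle directly.

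The main obstacle is coordinating the splice with the constraints that $\alpha$ is used and $\beta$ is avoided. I would choose $e$ so that one of the following holds: both $\alpha,\beta$ lie within $BG(M_0)$, both lie within $BG(M_1)$, both are cross edges, or they are split between sides. In each situation the induction hypothesis would be invoked so that the corresponding $C_i$ already includes or excludes the required edges, and whenever a cross edge must be used or avoided, it would be selected as (or excluded from) the splicing edges. The trickiest subcases arise when one of $BG(M_0),BG(M_1)$ is itself exceptional---namely $K_m$ with $m\le 3$ or $C_4$---since then the induction hypothesis does not apply directly; these would be handled by separate ad hoc arguments together with a finer analysis of the cross-edge structure guaranteed by the symmetric exchange axiom, and by leveraging the fact that the shared endpoint of $\alpha,\beta$ gives flexibility to reroute through $\emptyset$ locally.
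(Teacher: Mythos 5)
Your outline has the right general shape (twist so the shared end is $\emptyset$, split $\cB$ by membership of an element $e$ into the deletion and contraction parts, splice cycles across cross edges), but as written the induction does not close, for two concrete reasons. First, the splicing step needs a pair of cross edges whose four endpoints form a $4$-cycle straddling the partition, i.e.\ two vertices that are consecutive on $C_0$ matched to two vertices that are consecutive on $C_1$. For arbitrary Hamiltonian cycles $C_0,C_1$ handed to you by the induction hypothesis there is no reason such a matched pair exists. One must first prove that $BG(M)$ contains a $4$-cycle $B_1B_2B_3B_4$ with $B_1,B_4$ in one part and $B_2,B_3$ in the other, positioned compatibly with $\alpha$ and $\beta$; this is a genuine structural fact about even delta-matroids, and the paper's proof of the corresponding claim uses \ref{sea'} together with the circuit-orthogonality Lemma~\ref{lem:ortho} for the lift. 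Your proposal treats this as routine bookkeeping, but it is the technical heart of the argument.

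Second, and more fundamentally, your inductive statement is too weak to be spliced. Once a straddling $4$-cycle is located, what you need from each part is a Hamiltonian path between two \emph{prescribed} endpoints (equivalently, a Hamiltonian cycle through a \emph{prescribed} edge of that $4$-cycle), possibly also avoiding $\beta$ when $\beta$ lies in that part; the prescribed edge and $\beta$ will in general not share an end, so the hypothesis ``Hamiltonian cycle using $\alpha$ and avoiding $\beta$ where $\alpha,\beta$ share an end'' cannot be invoked on the parts. The paper avoids this by not inducting on Theorem~\ref{thm:Ham} at all: it first proves the stronger edge-pancyclicity result (Theorem~\ref{thm:main2}) --- whose proof is essentially the deletion/contraction induction you sketch, but with ``pancyclic edge'' as the inductive invariant --- and then derives Theorem~\ref{thm:Ham} directly, extracting Hamiltonian paths with prescribed adjacent endpoints from pancyclicity (or from hypercube structure when a part is degenerate), treating the triangle case $|B_1\symdiff B_3|=2$ separately via Proposition~\ref{prop:inex2}, and applying \ref{sea'} to $B_1$ and $B_3$ to produce the straddling $4$-cycle, with one extra exchange argument ruling out the subcase where every base other than $B_2$ and $B_3$ contains $B_1$. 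To repair your proposal you would need to strengthen the inductive invariant to something of this kind; the statement of Theorem~\ref{thm:Ham} is not self-improving under the splice.
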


Since every matroid is an even delta-matroid, we deduce the following corollary that generalizes both Theorems~\ref{thm:Cummins} and~\ref{thm:HH}.

\begin{corollary}
    Let $M$ be a matroid and let $\alpha$ and $\beta$ be distinct edges of $BG(M)$ sharing a common end.
    Then $BG(M)$ has a Hamiltonian cycle using $\alpha$ and avoiding $\beta$ unless it is isomorphic to $C_4$ or $K_m$ with $m\in\{1,2,3\}$. \qed
\end{corollary}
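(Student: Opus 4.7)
I would proceed by induction on $|\cB|$, the number of bases. After contracting coloops and deleting loops (operations that leave $BG(M)$ unchanged), every element $e \in E$ is \emph{non-trivial}: both $\cB_e := \{B \in \cB : e \in B\}$ and $\cB_{\bar e} := \{B \in \cB : e \notin B\}$ are nonempty. Under the natural bijection $B \mapsto B \setminus \{e\}$, the induced subgraph of $BG(M)$ on $\cB_e$ coincides with the basis graph $BG(M/e)$, and the induced subgraph on $\cB_{\bar e}$ coincides with $BG(M\setminus e)$; both are basis graphs of even delta-matroids with strictly fewer bases than $M$. Moreover, (SEA) together with the evenness hypothesis produces an abundance of crossing edges: for $B\in\cB_e$ and $B'\in\cB_{\bar e}$, (SEA) applied to $(B,B')$ and $e$ yields some $f \in B\symdiff B'$ with $B\symdiff\{e,f\}\in\cB$, and if $f=e$ then $B\symdiff\{e\}$ would have opposite parity from $|B|$, contradicting evenness; hence $f\ne e$ and $B\symdiff\{e,f\}$ is a neighbor of $B$ lying in $\cB_{\bar e}$. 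In particular every vertex of $\cB_e$ has a neighbor in $\cB_{\bar e}$, and symmetrically.

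Write $\alpha = B_1B_2$, $\beta = B_1B_3$, and set $S_\alpha = B_1\symdiff B_2$, $S_\beta = B_1\symdiff B_3$. The principal case is when some element $e$ lies in $E\setminus(S_\alpha\cup S_\beta)$; then both $\alpha$ and $\beta$ live entirely inside the induced subgraph on the part $\cB_\varepsilon$ containing $B_1$. The inductive hypothesis on this part yields a Hamiltonian cycle $C$ through $\alpha$ avoiding $\beta$, and another application of the inductive hypothesis to the opposite part yields a Hamiltonian cycle (or, in degenerate situations, a Hamiltonian path with prescribed endpoints) $C'$. I would then splice $C$ and $C'$ into a single Hamiltonian cycle of $BG(M)$ by replacing an edge $\gamma$ of $C$ (distinct from $\alpha$) and an edge $\gamma'$ of $C'$ with a pair of crossing edges joining the endpoints of $\gamma$ to those of $\gamma'$; the existence of a compatible pair $(\gamma,\gamma')$ should follow from a local-abundance lemma for crossing edges, itself a consequence of repeated applications of (SEA). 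The resulting cycle is Hamiltonian in $BG(M)$, traverses $\alpha$, and avoids $\beta$. Base cases, including the precise small basis graphs $C_4$ and $K_m$ for $m\le 3$ that appear as exceptions, are verified by direct inspection.

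The main obstacle is twofold. First, the splicing lemma is genuinely delicate: one must locate $\gamma$ and $\gamma'$ simultaneously, avoid $\beta$, and ensure that two distinct crossing edges tie up the endpoints correctly, which requires more than the mere existence of one crossing edge at each vertex and calls for a structural argument peculiar to even delta-matroids. Second, if every element of $E$ lies in $S_\alpha\cup S_\beta$ — a situation that, after the initial cleanup of loops and coloops, forces $|E|\le 4$ and so confines $M$ to a handful of small delta-matroids — the split cannot separate $\alpha$ from $\beta$; one must then argue directly, possibly by re-pivoting to swap the common endpoint $B_1$ for an equivalent base, or by exploiting the fact that when one of $BG(M/e)$, $BG(M\setminus e)$ happens to be one of the exceptional graphs, the crossing edges available more than compensate. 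Resolving these cases is where the exact list $\{C_4,K_1,K_2,K_3\}$ of excluded graphs becomes forced, in direct analogy with the matroid arguments of Holzmann and Harary.
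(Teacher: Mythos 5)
In the paper this corollary needs no argument at all: every matroid is an even delta-matroid, so it is an immediate consequence of Theorem~\ref{thm:Ham}, and the paper proves it exactly that way. Your proposal instead tries to re-derive the statement from scratch by induction on the number of bases, and as a proof it does not close; the two places you yourself flag as ``obstacles'' are genuine gaps rather than routine details. The splicing step is the main one. Knowing that every vertex of $\cB_e$ has a neighbour in $\cB_{\bar e}$ does not produce an edge $\gamma=XY$ of $C$ and an edge $\gamma'=X'Y'$ of $C'$ with \emph{both} $XX'$ and $YY'$ present as crossing edges, and that ``ladder'' configuration is exactly what the glueing requires. Moreover, the inductive statement you are propagating (a Hamiltonian cycle through $\alpha$ avoiding $\beta$) is too weak to drive the splice: on the half not containing $\alpha$ and $\beta$ you need a Hamiltonian \emph{path between two prescribed vertices}, i.e.\ Hamiltonian-connectedness, which is not what the induction hypothesis gives you. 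The paper resolves both issues at once: it uses \ref{sea'} on $B_1$ and $B_3$ to exhibit an explicit $4$-cycle straddling the split (e.g.\ $B_1B_2B_3\{e,g\}$), and then invokes the edge-pancyclicity machinery (Theorem~\ref{thm:main2}, Lemma~\ref{lem:cube}) to get Hamiltonian paths of $BG(M/e)$ and $BG(M\setminus e)$ with prescribed endpoints, so only two specific, already-certified crossing edges are ever needed.

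The second gap is the residual case. When $S_\alpha\cap S_\beta\neq\emptyset$, i.e.\ $B_1B_2B_3$ is a triangle, the split-and-splice scheme cannot separate $\alpha$ from $\beta$ no matter which $e$ you pick from outside $S_\alpha\cup S_\beta$ is chosen badly; the paper devotes Proposition~\ref{prop:inex2} to this case and proves it by a separate deletion/contraction argument, again resting on Theorem~\ref{thm:main2}. You also need to handle the situation where the half containing $\alpha$ and $\beta$ is itself one of the exceptional graphs ($C_4$ or $K_m$, $m\le 3$) even though $BG(M)$ is not --- for instance $U_{1,3}\oplus U_{1,2}$, whose basis graph is the triangular prism: there the induction hypothesis fails on the half and the desired cycle must use crossing edges, so ``the crossing edges more than compensate'' has to be turned into an actual argument. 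Until the splicing lemma, the prescribed-endpoint strengthening, and these exceptional subcases are supplied, the plan is a sketch of the (already harder) Theorem~\ref{thm:Ham} rather than a proof of the corollary; given that Theorem~\ref{thm:Ham} is available, the honest proof here is one line.
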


In Theorem~\ref{thm:Ham}, it is necessary that $\alpha$ and $\beta$ share a common end.
For instance, let $G$ be the graph obtained from $K_3$ by adding one parallel edge, and we denote the edges of $G$ by $a,b,c,d$ such that $b$ and $d$ are parallel.
Let $M$ be the cycle matroid of~$G$.
Then its basis graph $BG(M)$ is a $4$-wheel as depicted in Figure~\ref{fig:W4} and $BG(M)$ has no Hamiltonian cycle using $\{a,b\}\{a,c\}$ and avoiding $\{b,c\}\{c,d\}$.

\begin{figure}
    \centering
    \begin{tikzpicture}
        \begin{scope}
            \node[shape=circle,fill=black, scale=0.40] (B1) at (0,0) {};
            \node[shape=circle,fill=black, scale=0.40] (B2) at (2.4,0) {};
            \node[shape=circle,fill=black, scale=0.40] (B3) at (2.4,2.4) {};
            \node[shape=circle,fill=black, scale=0.40] (B4) at (0,2.4) {};
            \node[shape=circle,fill=black, scale=0.40] (B5) at (1.2,1.2) {};
            \draw (B1) -- (B2);
            \draw[very thick] (B2) -- (B3);
            \draw (B3) -- (B4) -- (B1);
            \draw[very thick] (B1) -- (B5);
            \draw (B2) -- (B5);
            \draw (B3) -- (B5);
            \draw (B4) -- (B5);

            \node [below=0.08cm of B1] {$ab$};
            \node [below=0.08cm of B2] {$bc$};
            \node [above=0.08cm of B3] {$cd$};
            \node [above=0.08cm of B4] {$da$};
            \node [right=0.08cm of B5] {$ac$};
        \end{scope}
    \end{tikzpicture}
    \caption{The basis graph of a cycle matroid $(\{a,b,c,d\},\{\{a,b\}, \{b,c\}, \{c,d\}, \{d,a\}, \{a,b\}\})$}
    \label{fig:W4}
\end{figure}
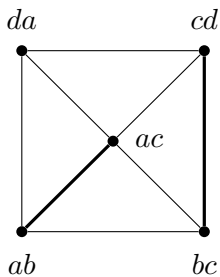

An $n$-vertex graph is \emph{vertex pancyclic} if for every vertex $v$ and every integer $3 \le k \le n$, there is a length-$k$ cycle including $v$.
A \emph{hypercube graph} $Q_d$ with nonnegative integer $d$ is a graph on $\{0,1\}^d$ such that two vertices are adjacent if and only if they differ in exactly one coordinate.
Here is our second theorem on the vertex pancyclicity of the basis graph of an even delta-matroid.
\begin{theorem}\label{thm:main}
    The basis graph of an even delta-matroid is vertex pancyclic unless it is isomorphic to a hypercube graph.
\end{theorem}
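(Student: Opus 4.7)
The plan is to reduce the statement to a polytopal theorem via Chepoi's interpretation of basis graphs of even delta-matroids as $1$-skeletons of $0$-$1$ polytopes. Given $M=(E,\cB)$, I would form the \emph{delta-matroid polytope}
\[
P_M \;:=\; \conv\{\chi_B : B \in \cB\}\;\subseteq\;\bR^E,
\]
a $0$-$1$ polytope (here $\chi_B$ denotes the characteristic vector of $B$), and prove that $BG(M)$ coincides with the $1$-skeleton of $P_M$: two distinct vertices $\chi_B,\chi_{B'}$ span an edge of $P_M$ if and only if $|B\symdiff B'|=2$. The ``if'' direction is routine, exhibiting a linear functional on $\bR^E$ maximised exactly on $\{\chi_B,\chi_{B'}\}$. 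For the ``only if'' direction I would invoke Chepoi's theorem (2007); concretely, whenever $|B\symdiff B'|\ge 4$, two applications of (SEA), combined with the evenness of $|B\symdiff B'|$, yield bases $B_1,B_2\in\cB\setminus\{B,B'\}$ with $\chi_B+\chi_{B'}=\chi_{B_1}+\chi_{B_2}$, showing that $\tfrac12(\chi_B+\chi_{B'})$ does not lie on an edge of $P_M$.

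Once this identification is in place, the proof is immediate: I would apply Naddef's theorem (1984) asserting that the graph of every $0$-$1$ polytope is either isomorphic to some hypercube graph $Q_d$ or is vertex pancyclic. Since $BG(M)\not\cong Q_d$ by hypothesis, $BG(M)$ is vertex pancyclic.

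The main obstacle is the ``only if'' half of the first step, i.e.\ ruling out ``long'' edges in $P_M$. Evenness of the delta-matroid is essential here: for general delta-matroids, spurious edges of $P_M$ between bases with $|B\symdiff B'|\ge 4$ can and do occur, so some structural input from even delta-matroid theory (such as Chepoi's result) cannot be avoided.

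An alternative, self-contained strategy would bypass the polytope machinery and proceed as follows. First, show that if $BG(M)$ is not a hypercube, then every vertex $v$ lies in a triangle; this would exploit the local structure of $3$- and $4$-cycles in basis graphs of even delta-matroids, leveraging the fact that a vertex incident to no triangle has a hypercube-like neighbourhood that then propagates globally. Next, use Theorem~\ref{thm:Ham} to produce a Hamiltonian cycle $C$ through $v$, and show that $C$ can be locally shortened via chord-swaps anchored at a triangle through $v$ so as to realise cycles of every length $3\le\ell\le n$ through $v$. Coordinating the shortcut moves so that they hit \emph{every} intermediate length, rather than skipping some, is the step I would expect to require the most care.
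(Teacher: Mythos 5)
Your route is essentially the paper's second proof of Theorem~\ref{thm:main2}: identify $BG(M)$ with the graph of the $0$--$1$ polytope $\conv(X_M)$ (Chepoi's Theorem~\ref{thm:dGGMS}), verify the ``nice''/combinatorial condition (Lemma~\ref{lem:nice}), and invoke Naddef. Your argument for ruling out long edges is fine --- indeed, applying Wenzel's axiom~\ref{sea'} to $B,B'$ with $|B\symdiff B'|\ge 4$ gives bases $B\symdiff\{e,f\}$ and $B'\symdiff\{e,f\}$, distinct from $B$ and $B'$, whose incidence vectors have the same midpoint as $\chi_B,\chi_{B'}$ --- and the ``if'' direction is routine because in an even delta-matroid the $2$-element symmetric differences are exactly the minimal ones.

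The gap is in the last step: the theorem you attribute to Naddef, that \emph{every} $0$--$1$ polytope has a graph that is either a hypercube or vertex pancyclic, is not what Naddef proved. For arbitrary $0$--$1$ polytopes only the Naddef--Pulleyblank dichotomy (hypercube or Hamiltonian-connected, Theorem~\ref{thm: NP}) is available; the pancyclicity results require the combinatorial/nice hypothesis, which you do establish, but even then they are \emph{edge-level} statements (Theorem~\ref{thm:Naddef}): an edge $uv$ is pancyclic precisely when $D(u,v)$ is not a $2$-valued component, and only almost pancyclic otherwise. So ``not a hypercube'' does not immediately yield a triangle through every vertex; you must still show that every vertex is incident to a pancyclic edge, i.e.\ to an edge $\chi_B\chi_{B'}$ with $B\symdiff B'$ contained in a component of $M$ of size at least three. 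This is exactly what the paper supplies in passing from Theorem~\ref{thm:main2} to Theorem~\ref{thm:main-strong}\ref{main2}: a non-hypercube basis graph forces a component $X$ with $|X|\ge 3$, Lemma~\ref{lem:disconn} guarantees that the restriction to $X$ has more than one base, and~\ref{sea} together with evenness produces, for each base $B$, a base $B'$ with $B\symdiff B'$ a $2$-subset of $X$. The missing argument is short, but without it your proof only gives that $BG(M)$ contains cycles of every length $\ell\ge 4$ through each vertex, not the triangles. (Your alternative ``self-contained'' sketch is too vague to assess, and note that Theorem~\ref{thm:Ham} is itself proved via Theorem~\ref{thm:main2}, so it cannot serve as an independent starting point.)
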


An edge $e$ of a graph $G$ is \emph{pancyclic} (respectively, \emph{almost pancyclic}) if $e$ is contained in a cycle of length $k$ for each $3\le k\le |V(G)|$ (respectively, $4 \le k \le |V(G)|$).
Components of a delta-matroid are defined in Section~\ref{sec:prelim}.
Our last theorem concerns the pancyclicity with respect to edges.

\begin{theorem}\label{thm:main2}
    Let $M$ be an even delta-matroid that has a component of size at least three.
    Then for every edge $B_1B_2$ of $BG(M)$,
    \begin{enumerate}[label=\rm(\roman*)]
        \item if $B_1\symdiff B_2$ is a component of $M$, then $B_1B_2$ is almost pancyclic but not pancyclic; and
        \item if $B_1\symdiff B_2$ is not a component of $M$, then $B_1B_2$ is pancyclic.
    \end{enumerate}
\end{theorem}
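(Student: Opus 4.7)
The plan is to handle the two parts of Theorem~\ref{thm:main2} separately.

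\textbf{Part (i): $B_1\triangle B_2$ is a component.} Setting $\{e,f\}:=B_1\triangle B_2$, the hypothesis gives a decomposition $M=M_1\oplus M_2$ with $E(M_1)=\{e,f\}$. Since $B_1$ and $B_2$ differ exactly on $\{e,f\}$, $M_1$ has at least two bases, and by the even parity condition one checks $\lvert\cB(M_1)\rvert=2$ and $BG(M_1)=K_2$, so $BG(M)\cong K_2\,\square\, BG(M_2)$, in which $B_1B_2$ is a vertical edge. Every triangle of $K_2\,\square\,H$ lies in a single copy of $H$, so $B_1B_2$ is in no $3$-cycle, proving $B_1B_2$ is not pancyclic. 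Because the component of $M$ of size at least three lies in $M_2$, the factor $BG(M_2)$ is not a hypercube graph, and Theorem~\ref{thm:main} yields vertex pancyclicity of $BG(M_2)$ at $x:=B_1\cap E(M_2)$; in particular $BG(M_2)$ is non-bipartite.

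For each $\ell\in[4,2n]$ with $n:=\lvert V(BG(M_2))\rvert$, I would build a cycle of length $\ell$ through $B_1B_2$. For \emph{even} $\ell=2r+2$ with $1\le r\le n-1$, I take a simple path $x=u_0,u_1,\dots,u_r$ in $BG(M_2)$ (obtained by opening a short cycle through $x$) and lift it to the \emph{ladder}
\[
(0,u_0) - (0,u_1) - \cdots - (0,u_r) - (1,u_r) - (1,u_{r-1}) - \cdots - (1,u_0) - (0,u_0).
\]
For \emph{odd} $\ell$, I would instead find two simple paths $P_0,P_1$ from $x$ to a common $y\in V(BG(M_2))$ whose lengths have different parities and sum to $\ell-2$, and lift $P_0$ to copy $0$ and $P_1$ to copy $1$, joined by the vertical edges at $x$ and $y$. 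Small odd $\ell$ come from lifting an odd cycle of $BG(M_2)$ through $x$; larger odd $\ell$ are obtained by concatenating a near-Hamiltonian path with an odd detour, which is available because $BG(M_2)$ is non-bipartite.

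\textbf{Part (ii): $B_1\triangle B_2$ is not a component.} The plan is first to find a triangle $B_1B_2B_3$ through $B_1B_2$ by producing $g\in E(M)\setminus\{e,f\}$ with $B_3:=B_1\triangle\{e,g\}\in\cB$; then $\lvert B_1\triangle B_3\rvert=\lvert B_2\triangle B_3\rvert=2$. If $e,f$ lie in distinct components, the component $C_e$ of $e$ has at least two bases (since $e$ varies across $B_1,B_2$), so picking any base $B'\ne B_1\cap C_e$ of $M|_{C_e}$ and applying~\ref{sea} to $B_1\cap C_e,B'$ with element $e$ yields a suitable $g\in C_e\setminus\{e\}\subseteq E\setminus\{e,f\}$. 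If $e,f$ lie in a common component $C$ with $\lvert C\rvert\ge3$, I argue by contradiction: if no such $g\in C\setminus\{e,f\}$ existed for either the $\{e,g\}$- or the $\{f,g\}$-swap from $B_1$, then for every $2$-edge $BB'$ of $BG(M)$ with $B\triangle B'=\{e,g\}$ (or $\{f,g\}$) and $g\notin\{e,f\}$, applying~\ref{sea} to $(B_1,B)$ and $(B_1,B')$ with the appropriate element of $\{e,f\}$ forces the other element of $\{e,f\}$ into a position it cannot occupy, a contradiction. Hence every $2$-edge $BB'$ of $BG(M)$ satisfies $B\triangle B'\subseteq\{e,f\}$ or $B\triangle B'\subseteq E\setminus\{e,f\}$, which makes $\{e,f\}$ a union of components; this forces $C=\{e,f\}$, contradicting $\lvert C\rvert\ge3$.

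Given the triangle, for each $\ell$ with $4\le\ell\le\lvert V(BG(M))\rvert$ the plan is to use vertex pancyclicity of $BG(M)$ at $B_3$ (available by Theorem~\ref{thm:main} since $BG(M)$ is not a hypercube) to obtain an $(\ell-1)$-cycle through $B_3$ that uses the edge $B_1B_3$ and avoids $B_2$, and then splice in $B_2$ by replacing the edge $B_1B_3$ with the length-$2$ path $B_1B_2B_3$. The main obstacles I anticipate are: in Part (i), constructing odd cycles of length in the intermediate range $(n+2,2n)$, where neither a pure lift of a single cycle nor a pure ladder suffices and one must carefully stitch paths of different parities using the non-bipartite structure of $BG(M_2)$; and in Part (ii), ensuring the $(\ell-1)$-cycle through $B_3$ both uses the specified edge $B_1B_3$ and avoids $B_2$, which will likely need a local-modification argument in the spirit of the proof of Theorem~\ref{thm:main}.
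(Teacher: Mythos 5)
There are genuine gaps, and they sit exactly where you flag your ``anticipated obstacles.'' First, a structural problem: both parts of your argument invoke Theorem~\ref{thm:main}, but in this paper Theorem~\ref{thm:main} is itself a consequence of Theorem~\ref{thm:main2} (via Theorem~\ref{thm:main-strong}), so you must either give an independent proof of vertex pancyclicity or set up an induction on $\lvert E(M)\rvert$ and only apply the statement to strictly smaller delta-matroids. Your Part~(i) could be read as an inductive call on $M_2$, but your Part~(ii) applies vertex pancyclicity to $BG(M)$ \emph{itself}, which is circular. Second, in Part~(i) plain vertex pancyclicity of $BG(M_2)$ is not enough: to realize every odd length $\ell$ with $n+2<\ell<2n$ you need two $x$--$y$ paths to a \emph{common} $y$ whose lengths have different parities and sum to $\ell-2$, and cycles through $x$ of various lengths that use different neighbours of $x$ cannot be paired this way. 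What makes the construction work is a single \emph{pancyclic edge} $xy$ at $x$: the $k$-cycles through $xy$ give $x$--$y$ paths of every length $1,\dots,n-1$, which can then be paired freely. This is why the paper proves the stronger ``strongly vertex pancyclic'' statement by induction and invokes Lemma~\ref{lem: square product by complete}(i); your sketch never closes this loop.

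Third, Part~(ii) is where the real work lies and your plan does not survive contact with it. Splicing $B_2$ into an $(\ell-1)$-cycle requires a cycle through the \emph{specified edge} $B_1B_3$ that \emph{avoids the specified vertex} $B_2$; vertex pancyclicity at $B_3$ guarantees neither, and ``a cycle of every length through a specified edge'' is precisely the edge-pancyclicity you are trying to prove, so the argument is essentially assuming its conclusion. Even the preliminary step of producing the triangle is not routine: your case ``$e,f$ in distinct components'' is vacuous (by Lemma~\ref{lem:equiv} an edge of $BG(M)$ forces $e\sim_M f$), and your contradiction argument in the remaining case is too vague to check --- the paper needs \ref{sea}, \ref{sea'} \emph{and} the circuit-orthogonality Lemma~\ref{lem:ortho} to produce not just a triangle but a $4$-cycle $B_1B_2B_3B_4$ with a chord, positioned so that $B_1,B_4$ contain $e$ and $B_2,B_3$ do not. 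That positioning is the point: the paper then splits $BG(M)$ into $BG(M/e)$ and $BG(M\setminus e)$ (Lemma~\ref{lem:del and cont}), applies the induction hypothesis to each side, uses Bouchet's Theorem~\ref{thm: connectivity reduction} to guarantee that at least one side carries a genuinely pancyclic edge, and stitches the two sides with Lemma~\ref{lem: larger pancyclic}. Nothing in your proposal substitutes for this mechanism.
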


Theorem~\ref{thm:main2} implies the following theorem of Bondy and Ingleton~\cite{BI1976}.

\begin{corollary}[Bondy and Ingleton~\cite{BI1976}]
    Let $M$ be a matroid that has a component of size at least three.
    Then for every edge $B_1B_2$ of $BG(M)$,
    \begin{enumerate}[label=\rm(\roman*)]
        \item if $B_1\symdiff B_2$ is a component of $M$, then $B_1B_2$ is almost pancyclic but not pancyclic; and
        \item if $B_1\symdiff B_2$ is not a component of $M$, then $B_1B_2$ is pancyclic. \qed
    \end{enumerate}
\end{corollary}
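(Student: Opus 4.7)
The plan is to first reduce by twisting: replacing $M$ with the twisted delta-matroid whose bases are $\{B \symdiff B_1 : B \text{ is a base of } M\}$, which is an even delta-matroid with basis graph isomorphic to $BG(M)$, I may assume $B_1 = \emptyset$ and $B_2 = \{e, f\}$ for some $e, f \in E$. The dichotomy in the statement then becomes whether $\{e, f\}$ is a component of $M$. In case (i), where $\{e, f\}$ is a $2$-element component, I would write $M = M_0 \oplus M'$ with $M_0 = (\{e, f\}, \{\emptyset, \{e, f\}\})$, so that $BG(M) \cong BG(M') \,\square\, K_2$ and the edge $B_1 B_2$ is the ``vertical'' edge $(B', 0)(B', 1)$ for some base $B'$ of $M'$. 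The endpoints of such an edge share no common neighbor in a Cartesian product with a $K_2$ factor, so $B_1 B_2$ lies in no triangle and is therefore not pancyclic. Since $M$ has a component of size at least three and $M_0$ accounts for only two elements, the larger component lies in $M'$; by Theorem~\ref{thm:main}, $BG(M')$ is vertex pancyclic through $B'$. For each $\ell \in \{4, \ldots, |V(BG(M))|\}$, I would construct an $\ell$-cycle through $(B', 0)(B', 1)$ by lifting cycles and paths from $BG(M')$ through $B'$: two copies joined by a vertical transition at a chosen ``pivot'' yield a cycle whose length is two more than that of the underlying cycle in $BG(M')$, and the longest cycles exploit Hamilton-type paths in $BG(M')$ obtained from Theorem~\ref{thm:Ham}.

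For case (ii), where $\{e, f\}$ is not a component, some $g \notin \{e, f\}$ lies in the same component as $e$ or $f$. The first task is to produce a base of the form $\{e, g\}$ or $\{f, g\}$, which yields a triangle through $B_1 B_2$ and hence a $3$-cycle. This is established by chasing a witness of the connectivity through a sequence of symmetric exchanges against bases adjacent to $B_1$ or $B_2$. Theorem~\ref{thm:Ham} then provides a Hamilton cycle through $B_1 B_2$, handling the length $|V(BG(M))|$ case. For the intermediate lengths $4 \le \ell \le |V(BG(M))| - 1$, I would use either (a) an induction on $|E|$ via delta-matroid deletion or contraction that preserves the edge $B_1 B_2$ and the hypothesis of the theorem, combining short cycles built around the triangle with the Hamilton cycle; or (b) the polytope-theoretic route alluded to in the introduction, where Chepoi's characterization of the $2$-faces of the basis polytope of an even delta-matroid together with Naddef's cycle-structure result for $(0,1)$-polytopes yields edge pancyclicity in one stroke.

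The hardest step is case (ii): producing cycles of every intermediate length through the specific edge $B_1 B_2$, rather than merely through a vertex. Vertex pancyclicity (Theorem~\ref{thm:main}) supplies cycles of every length through $B_1$, but forcing $B_1 B_2$ itself into each such cycle requires finer control, which is presumably why the paper offers two independent proofs. The initial triangle construction in case (ii) is also delicate, since it must convert the abstract connectivity of $M$ into the concrete existence of a size-$2$ base via careful applications of the symmetric exchange axiom.
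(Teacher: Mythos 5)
The paper's own proof of this corollary is a one-line specialization: a matroid is by definition a delta-matroid whose bases all have the same cardinality, hence an even delta-matroid, and its delta-matroid components are its usual matroid components; so the statement follows verbatim from Theorem~\ref{thm:main2}, which is why it is stated without further argument. Your proposal never invokes Theorem~\ref{thm:main2} and instead sketches a re-derivation of it from Theorems~\ref{thm:main} and~\ref{thm:Ham}. That is not circular (those results are proved in the paper independently of this corollary), but it replaces an immediate deduction with an outline of the hard theorem itself, and as such an outline it has a genuine gap.

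The gap is in your case (i). Writing $BG(M)\cong K_2\,\square\,BG(M')$, your constructions through the vertical edge at $B'$ --- doubling a path from $B'$ to get even cycles, and lifting a $k$-cycle through $B'$ to a $(k+2)$-cycle --- yield all even lengths $4,6,\dots,2N$ but odd lengths only up to $N+2$, where $N=|V(BG(M'))|$; an odd cycle of length near $2N$ through the vertical edge must visit almost every vertex of the product and cannot be obtained by either device. This is precisely why Lemma~\ref{lem: square product by complete}(i) of Bondy and Ingleton assumes $G$ is \emph{strongly} vertex pancyclic, i.e.\ has a pancyclic \emph{edge} at $B'$; vertex pancyclicity from Theorem~\ref{thm:main} is not the right hypothesis, whereas Theorem~\ref{thm:main2} applied to $M'$ supplies exactly the needed pancyclic edge. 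In case (ii) your triangle step is sound (Lemma~\ref{lem:sep} gives a base meeting $\{e,f\}$ in exactly one element, and one application of~\ref{sea} produces a base $\{f,g\}$), but for the intermediate lengths you only gesture at the two arguments the paper actually carries out for Theorem~\ref{thm:main2}: the deletion/contraction induction, whose real content is the straddling $4$-cycle of the paper's Claim together with Theorem~\ref{thm: connectivity reduction} and the splicing Lemma~\ref{lem: larger pancyclic}, or the Naddef--Chepoi polytope route. The efficient repair is simply to cite Theorem~\ref{thm:main2} and observe that matroids are even delta-matroids.
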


We provide two proofs of Theorem~\ref{thm:main2} in Section~\ref{sec:ham}. 
The first one is a direct proof from various properties of even delta-matroids, and the second one uses the results of Naddef~\cite{Naddef1984} on polytopes and Chepoi~\cite{Chepoi2007} on basis graphs of even delta-matroids.

This paper is organized as follows.
In Section~\ref{sec:prelim}, we define some terminology on graphs and delta-matroids and review some properties of even delta-matroids and their basis graphs.
In Section~\ref{sec:ham}, we prove Theorems~\ref{thm:main} and~\ref{thm:main2} and we prove Theorem~\ref{thm:Ham} in Section~\ref{sec:inex}.

\section{Preliminaries}\label{sec:prelim}

For two graphs $G$ and $H$, let $G \square H$ be a graph on $V(G) \times V(H)$ such that $(x_1,y_1)$ and $(x_2,y_2)$ are adjacent if and only if either $x_1=x_2$ and $y_1y_2 \in E(H)$, or $y_1=y_2$ and $x_1x_2 \in E(G)$.
We denote the complete graph on $n$ vertices by $K_n$ and denote the cycle of length $n$ by $C_n$.

For a delta-matroid $M=(E,\cB)$, we write $E(M)$ for the \emph{ground set} $E$ of $M$, and $\cB(M)$ for the set $\cB$ of bases of~$M$.
It is easy to see that a delta-matroid $M$ is even if and only if for all bases $B,B'$ and an element $e\in B\symdiff B'$, there is $f\in (B\symdiff B')\setminus\{e\}$ such that $B\symdiff \{e,f\}\in \cB(M)$.
Wenzel~\cite{Wenzel1993} showed a stronger exchange axiom holds for even delta-matroids.

\begin{theorem}[Wenzel~\cite{Wenzel1993}]
    A pair $(E,\cB)$ of a finite set $E$ and a nonempty set $\cB$ of subsets of $E$ is an even delta-matroid if and only if 
    the following holds:
    \begin{enumerate}[label=\rm(SEA$'$)]
        \item\label{sea'} If $B, B' \in \cB$ and $e \in B \symdiff B'$, then there is $f \in (B \symdiff B')\setminus\{e\}$ such that both $B \symdiff \{e,f\}$ and $B' \symdiff \{e,f\}$ are in $\cB$.
    \end{enumerate}
\end{theorem}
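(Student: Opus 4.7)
The plan is to handle the two implications separately; all nontrivial content lies in the ``only if'' direction, which is Wenzel's strengthening.

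For the ``if'' direction, I would first observe that (SEA$'$) is formally stronger than (SEA) (it yields the same witness $f$, with the additional property $f\neq e$ and an extra constraint), so $(E,\cB)$ is automatically a delta-matroid. To establish evenness, I would show by induction on $|B\symdiff B'|$ that any two bases have cardinalities of the same parity. The base case $B=B'$ is trivial. Otherwise pick any $e\in B\symdiff B'$ and apply (SEA$'$) to produce $f\in(B\symdiff B')\setminus\{e\}$ with $B_1:=B\symdiff\{e,f\}\in\cB$. Then $|B_1|$ and $|B|$ differ by a symmetric difference of size exactly $2$, so $|B_1|\equiv|B|\pmod 2$, while $|B_1\symdiff B'|=|B\symdiff B'|-2$, so the induction hypothesis yields $|B_1|\equiv|B'|\pmod 2$, and hence $|B|\equiv|B'|\pmod 2$.

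For the ``only if'' direction, I would proceed by induction on $n:=|B\symdiff B'|$, which is even because $M$ is even. The base case $n=2$ is immediate: writing $B\symdiff B'=\{e,f\}$ forces the witness to be $f$, and then $B\symdiff\{e,f\}=B'$ and $B'\symdiff\{e,f\}=B$ are both bases. For the inductive step $n\ge 4$, apply (SEA) to $(B,B')$ at $e$ to obtain some $f\in(B\symdiff B')\setminus\{e\}$ with $B_1:=B\symdiff\{e,f\}\in\cB$, and apply (SEA) to $(B',B)$ at $e$ to obtain some $g\in(B\symdiff B')\setminus\{e\}$ with $B'_1:=B'\symdiff\{e,g\}\in\cB$. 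If $f=g$, or if one of $B'\symdiff\{e,f\}$ or $B\symdiff\{e,g\}$ already lies in $\cB$, then $f$ or $g$ directly witnesses (SEA$'$). Otherwise I would apply the induction hypothesis to the shorter pair $(B_1,B')$ (or symmetrically $(B,B'_1)$), which has symmetric difference of size $n-2$, at a carefully chosen element of that symmetric difference, and then stitch the resulting two-sided exchanges together via further applications of (SEA) to produce an element $f'\in(B\symdiff B')\setminus\{e\}$ with both $B\symdiff\{e,f'\}\in\cB$ and $B'\symdiff\{e,f'\}\in\cB$.

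The main obstacle is precisely this final recombination step. Starting from two independent applications of (SEA) that each succeed on one side of the pair but fail on the other, one must feed a shorter pair into the induction hypothesis and combine the outputs while preserving the distinguished element $e$. This requires keeping careful track of how membership of $e$, $f$, $g$, and an auxiliary element behave under several nested symmetric differences, and some case analysis appears unavoidable to guarantee that the element chosen in the inductive call lies in the correct symmetric difference and yields a genuine upgrade of (SEA), rather than merely a sideways shift to another pair that fails (SEA$'$) in the same way.
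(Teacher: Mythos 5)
The paper does not prove this theorem; it is quoted from Wenzel~\cite{Wenzel1993}, so there is no internal proof to compare against. Judged on its own terms, your proposal establishes the easy ``if'' direction correctly: (SEA$'$) trivially implies (SEA), and your induction on $|B\symdiff B'|$ for parity is sound, since the witness $f\ne e$ forces $|B\symdiff\{e,f\}|\equiv|B|\pmod 2$ while $|B\symdiff\{e,f\}\symdiff B'|=|B\symdiff B'|-2$.

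The ``only if'' direction, however, is the entire content of the theorem, and there you have a genuine gap that you yourself flag: the ``recombination step'' is never carried out, and as described it does not obviously close. Concretely, after obtaining $B_1=B\symdiff\{e,f\}\in\cB$ with $B'\symdiff\{e,f\}\notin\cB$, the element $e$ no longer lies in $B_1\symdiff B'$, so the induction hypothesis applied to the shorter pair $(B_1,B')$ cannot be invoked ``at $e$''; and whatever exchange it returns, translating it back to $B$ by re-toggling $\{e,f\}$ produces a set at symmetric distance $4$ from $B$ in general, not one of the form $B\symdiff\{e,f'\}$. So the inductive call does not hand you a witness for the original pair, and no mechanism is given for converting it into one. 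Note also that the statement is genuinely false without evenness (the paper's example $(\{1,2,3\},\{\emptyset,\{1\},\{2\},\{3\},\{1,2,3\}\})$ with $B=\emptyset$, $B'=\{1,2,3\}$), so any correct argument must use evenness beyond merely securing $f\ne e$; your sketch does not indicate where that happens in the inductive step. As it stands, the hard direction is a plan with an acknowledged missing idea rather than a proof, and the missing idea is the theorem.
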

One may wish that for every delta-matroid, if $B, B'$ are bases and $e \in B \symdiff B'$, then there is $f \in B \symdiff B'$ such that both $B \symdiff \{e,f\}$ and $B' \symdiff \{e,f\}$ are bases. 
However, there is a counterexample from~{\cite[Section~4.2.4]{BGW2003}}.
Let us consider a delta-matroid $(\{1,2,3\},\{\emptyset, \{1\}, \{2\}, \{3\}, \{1,2,3\}\})$ and its bases $B=\emptyset$ and $B'=\{1,2,3\}$. 
Then there is no pair of elements $e,f \in B\symdiff B'$, possibly $e=f$, such that both $B\symdiff\{e,f\}$ and $B'\symdiff\{e,f\}$ are bases.
We review some ways to construct delta-matroids.
Let $M=(E,\cB)$ be a delta-matroid. 
For a subset $X$ of $E$, let $\cB \symdiff X := \{B \symdiff X : B \in \cB\}$ and then $M \symdiff X := (E, \cB \symdiff X)$ is a delta-matroid.
We call this operation \emph{twisting on $X$}.
Evidently, if $M$ is even, then so is $M\symdiff X$.
We call $M^* := M\symdiff E$ the \emph{dual} of $M$.
An element $e\in E$ is a \emph{loop} if it is not in any base of $M$, and $e$ is a \emph{coloop} if it is in all bases of $M$.
Note that $e$ is a loop in $M$ if and only if it is a coloop in $M\symdiff \{e\}$.
For $e\in E$, the \emph{deletion of $e$ from $M$} is a pair $M \setminus e := (E\setminus\{e\}, \cB\setminus e)$ where
\[
    \cB\setminus e := 
    \begin{cases}
        \{B : e\notin B \in \cB\} & \text{if $e$ is not a coloop of $M$}, \\
        \{B\setminus\{e\}: B \in \cB\} & \text{otherwise}.
    \end{cases}
\]
It is easy to see that $M\setminus e$ is a delta-matroid, and if $M$ is even, then $M\setminus e$ is also even.
The \emph{contraction of $e$ from $M$} is a delta-matroid $M/e := (M\symdiff\{e\})\setminus e$.

If an element $e$ of an even delta-matroid $M$ is neither a loop nor a coloop, then $\cB(M)$ is partitioned into two sets, one of which induces $BG(M\setminus e)$ and the other induces a subgraph isomorphic to $BG(M/e)$.
\begin{lemma}\label{lem:del and cont}
    If $M$ is an even delta-matroid and $e\in E(M)$ is neither a loop nor a coloop, then
    \begin{enumerate}[label=\rm(\roman*)]
        \item $BG(M\setminus e)$ is equal to the subgraph of $BG(M)$ induced by $\{B : e\notin B \in \cB(M)\}$, and 
        \item $BG(M/e)$ is isomorphic to the subgraph of $BG(M)$ induced by $\{B\setminus\{e\} : e\in B \in \cB(M)\}$. \qed
    \end{enumerate}
\end{lemma}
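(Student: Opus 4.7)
The plan is to unpack definitions carefully and verify that the hypothesis on $e$ (neither a loop nor a coloop) makes everything go through.

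For part (i), I would first use the definition of deletion. Since $e$ is not a coloop of $M$, we have $\cB(M\setminus e) = \{B : e \notin B \in \cB(M)\}$, so the vertex set of $BG(M\setminus e)$ matches the vertex set of the induced subgraph of $BG(M)$ in question. It then suffices to check that the adjacency relation agrees. For any two bases $B,B' \in \cB(M\setminus e)$, neither contains $e$, hence the symmetric difference $B \symdiff B'$ computed in $E(M)$ is identical (as a set) to the one computed in $E(M)\setminus\{e\}$. Thus $|B \symdiff B'|=2$ in one graph iff in the other.

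For part (ii), I would reduce to (i) via the identity $M/e = (M\symdiff\{e\})\setminus e$. Twisting by $\{e\}$ sends each base $B$ of $M$ to $B \symdiff \{e\}$; because $e$ is not a loop of $M$, $e$ is not a coloop of $M \symdiff \{e\}$, so part (i) applies to the even delta-matroid $M \symdiff \{e\}$ and gives
\[
  \cB(M/e) \;=\; \{B\symdiff\{e\} : e \in B \in \cB(M)\} \;=\; \{B\setminus\{e\} : e \in B \in \cB(M)\}.
\]
Define $\varphi(B) := B\setminus\{e\}$; this is a bijection from $\{B \in \cB(M) : e \in B\}$ onto $V(BG(M/e))$. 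For any two such bases $B_1, B_2$ both containing $e$, we have $(B_1\setminus\{e\}) \symdiff (B_2\setminus\{e\}) = B_1 \symdiff B_2$, so $\varphi$ preserves the size of symmetric differences, and in particular preserves adjacency between $BG(M)$ (restricted to the relevant vertex set) and $BG(M/e)$. Hence $\varphi$ is the desired graph isomorphism.

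There is no substantial obstacle here; the only subtlety is bookkeeping. In (i) one must remember to invoke the non-coloop hypothesis to pick out the correct branch of the piecewise definition of $\cB \setminus e$, and in (ii) one must check that passing from $M$ to $M\symdiff\{e\}$ swaps the loop/coloop status of $e$, so that the non-loop hypothesis on $e$ in $M$ is precisely what lets us reuse part (i) at the second step.
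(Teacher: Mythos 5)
Your proof is correct: the paper states this lemma without proof (marking it as immediate from the definitions), and your verification is exactly the intended routine argument. The two points you flag as the only subtleties --- invoking the non-coloop hypothesis to select the right branch of the definition of $\cB\setminus e$, and observing that $e$ being a non-loop of $M$ makes it a non-coloop of $M\symdiff\{e\}$ so that (ii) reduces to (i) via the twist, with $B\mapsto B\setminus\{e\}$ preserving symmetric differences --- are precisely the checks the paper leaves to the reader.
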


For two delta-matroids $M_1 = (E_1,\cB_1)$ and $M_1 = (E_2,\cB_2)$ on disjoint ground sets, the \emph{direct sum} $M_1 \oplus M_2$ of $M_1$ and $M_2$ is a delta-matroid $(E_1 \cup E_2, \cB_1 \oplus \cB_2)$ where $\cB_1 \oplus \cB_2 := \{X\cup Y : X\in \cB_1 \text{ and } Y \in \cB_2\}$.
A \emph{separator} of a delta-matroid $M$ is a subset $X$ of $E(M)$ such that $M$ is the direct sum of two delta-matroids whose ground sets are $X$ and $E(M)\setminus X$.
Clearly, $\emptyset$ and $E(M)$ are always separators of $M$.
A \emph{component} of $M$ is a minimal nonempty separator.
A delta-matroid $M$ is \emph{connected} if $E(M)$ is a component or $E(M)$ is empty.
It is easy to see that a delta-matroid is even if and only if all of its components induce even delta-matroids.

Tutte~\cite{Tutte1966b} proved that for every connected matroid $M$ and every element $x\in E(M)$, $M\setminus x$ or $M/x$ is connected.
Bouchet~\cite{Bouchet2001} generalized this result to tight multimatroids. A special case of this result implies the following for even delta-matroids.

\begin{theorem}[Bouchet~\cite{Bouchet2001}]\label{thm: connectivity reduction}
    Let $M$ be an even delta-matroid.
    If $M$ is connected, 
    then for every $x\in E(M)$, $M\setminus x$ or $M/x$ is connected.
\end{theorem}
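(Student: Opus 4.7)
Bouchet proves this in the broader framework of tight multimatroids; let me describe a plan for a direct proof in the even delta-matroid setting, following the spirit of Tutte's original argument~\cite{Tutte1966b} and exploiting the strong exchange axiom \ref{sea'}. First, reduce to the case $|E(M)|\ge 2$ with $x$ neither a loop nor a coloop. Indeed, if $x$ is a loop (or, after twisting by $\{x\}$, a coloop), then $\{x\}$ is itself a separator of $M$; since $M$ is connected, this forces $E(M)=\{x\}$, in which case $M\setminus x$ has empty ground set and is connected by convention.

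Now suppose for contradiction that both $M\setminus x$ and $M/x$ are disconnected, and pick nontrivial bipartitions $E(M)\setminus\{x\}=A_1\sqcup A_2$ separating $M\setminus x$ and $E(M)\setminus\{x\}=C_1\sqcup C_2$ separating $M/x$. I would work with the ``mixing'' characterization of separators: $X\subseteq E(M)$ is a separator of $M$ if and only if $(B\cap X)\cup(B'\setminus X)\in\cB(M)$ for all $B,B'\in\cB(M)$, which is easily verified from the definition of direct sum. The aim is to combine the two given mixing properties---one governing bases avoiding $x$, one governing bases containing $x$ (via the twist in the definition of contraction)---into a single mixing property at the level of $M$, so that one of $A_1$, $A_2$, $A_1\cup\{x\}$, or $A_2\cup\{x\}$ is forced to be a separator of $M$, contradicting connectedness.

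The main obstacle is aligning $(A_1,A_2)$ with $(C_1,C_2)$ once $x$ is reattached. Since $x$ is neither a loop nor a coloop, there exist bases $B\ni x$ and $B'\not\ni x$; applying \ref{sea'} to $B$, $B'$, and $x\in B\symdiff B'$ yields an element $f\in E(M)\setminus\{x\}$ such that $B\symdiff\{x,f\}$ and $B'\symdiff\{x,f\}$ are both bases. Tracking which of the four cells $A_i\cap C_j$ contains $f$, and iterating this exchange over all pairs of bases straddling $x$, should force the bipartitions $\{A_1,A_2\}$ and $\{C_1,C_2\}$ to coincide on $E(M)\setminus\{x\}$; the common partition, together with the forced placement of $x$, then yields the desired separator of $M$. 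The delicate bookkeeping---showing that this alignment persists globally rather than for only a single pair of bases, and that it is not destroyed by the twist inherent in the definition of $M/x$---is where the real work lies, and it mirrors the circuit-exchange analysis that underlies Bouchet's proof for tight multimatroids~\cite{Bouchet2001}.
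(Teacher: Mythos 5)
The paper does not actually prove Theorem~\ref{thm: connectivity reduction}; it is imported from Bouchet's work on tight multimatroids, so there is no in-paper argument to compare yours against. Your preliminary reductions are fine: if $x$ is a loop or a coloop then $\{x\}$ is a separator of $M$, connectedness forces $E(M)=\{x\}$, and the trivial case is dispatched; the ``mixing'' characterization of separators ($X$ is a separator if and only if $(B\cap X)\cup(B'\setminus X)$ is a base for all bases $B,B'$) is correct and easy to verify from the definition of direct sum.

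The heart of the proof, however, is missing. Everything from ``The aim is to combine\dots'' onward is a statement of intent rather than an argument: the decisive claim --- that iterating \ref{sea'} over pairs of bases straddling $x$ ``should force'' the partitions $\{A_1,A_2\}$ and $\{C_1,C_2\}$ to coincide --- is neither proved nor obviously the right intermediate target. In Tutte-style proofs one does not show that the two separations agree; one shows that a suitable cell of the common refinement $A_i\cap C_j$ (possibly together with $x$) is a separator of $M$, typically via a submodularity inequality for a connectivity function or a careful circuit analysis, and none of that bookkeeping appears here. A single application of \ref{sea'} to one pair of bases produces one element $f$ and two new bases; you give no mechanism by which the location of $f$ among the four cells constrains the partitions globally, nor do you explain how the twist hidden in the definition of $M/x=(M\symdiff\{x\})\setminus x$ is handled when transporting the mixing property for bases containing $x$ back to $M$. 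Since you yourself flag this step as ``where the real work lies,'' the proposal is an outline with the essential step deferred, not a proof; to complete it you would either have to carry out that exchange/refinement analysis in full or simply cite Bouchet's theorem on tight multimatroids as the paper does.
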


Later, we will use the following simple observation.

\begin{lemma}\label{lem:disconn}
    If a delta-matroid $M$ has a unique base, then either $M$ is disconnected or $|E(M)|\le 1$.
\end{lemma}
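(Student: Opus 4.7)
The plan is to reduce the unique-base hypothesis to a statement about loops and coloops, and then observe that any loop or coloop splits off as a separator, which prevents connectedness as soon as the ground set has at least two elements.

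First I would note that if $\cB(M)=\{B\}$, then for every $e\in E(M)$, either $e\in B$, in which case $e$ lies in every base and is therefore a coloop, or $e\notin B$, in which case $e$ is a loop. So every element of $E(M)$ is a loop or a coloop.

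Next I would verify that each loop and each coloop generates a singleton separator. If $e$ is a loop, then $\cB(M)\subseteq 2^{E\setminus\{e\}}$, so setting $M_1:=(\{e\},\{\emptyset\})$ and $M_2:=(E\setminus\{e\},\cB(M))$ gives $M=M_1\oplus M_2$, exhibiting $\{e\}$ as a separator; if $e$ is a coloop, the same conclusion follows with $M_1:=(\{e\},\{\{e\}\})$ and $M_2:=M/e$, both of which are delta-matroids by the constructions recalled earlier in the excerpt.

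Finally, suppose for contradiction that $M$ is connected and $|E(M)|\ge 2$. Connectedness forces $E(M)$ to be a minimal nonempty separator. But by the previous step, picking any $e\in E(M)$ produces a nonempty separator $\{e\}$ that is strictly contained in $E(M)$, contradicting minimality. Hence either $M$ is disconnected or $|E(M)|\le 1$, as required.

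There is no real obstacle here; the only thing worth being careful about is to quote the correct formulation of \emph{connected} used in the excerpt (namely, that $E(M)$ itself must be a component, equivalently a \emph{minimal} nonempty separator), since otherwise one might be tempted to argue directly that $\{e\}$ is a component, which would require checking minimality that is not actually needed for the conclusion.
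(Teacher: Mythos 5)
Your proof is correct, and the paper itself offers no proof of this lemma (it is stated as a ``simple observation''), so there is nothing to diverge from: the argument you give --- every element of a single-base delta-matroid is a loop or coloop, each of which splits off as a singleton separator, contradicting minimality of $E(M)$ when $|E(M)|\ge 2$ --- is exactly the intended justification.
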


The components of a delta-matroid form a partition of the ground set.
For an even delta-matroid~$M$, we say $x\sim_M y$ if $x=y$ or there are elements $x_0 = x, x_1, x_2 \ldots, x_k = y$ in $E$ such that 
$\{x_{i-1},x_i\} = B_i \symdiff B_i'$ for every integer $1\le i \le k$ and some bases $B_i$ and $B_i'$ of $M$.
It is easy to see that $\sim_M$ is an equivalence relation on $E(M)$.
Then the equivalence classes of $\sim_M$ are precisely the components of $M$ as shown in the following lemma.

\begin{lemma}\label{lem:equiv}
    Let $M$ be an even delta-matroid.
    For $x,y\in E(M)$, the following are equivalent:
    \begin{enumerate}[label=\rm(\roman*)]
        \item\label{item:equiv1} $x$ and $y$ belong to the same component of $M$.
        \item\label{item:equiv2} $x\sim_M y$.
        \item\label{item:equiv3} $x=y$ or there are bases $B_1,B_2$ of $M$ such that $B_1\symdiff B_2 = \{x,y\}$.
    \end{enumerate}
\end{lemma}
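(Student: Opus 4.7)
The plan is to establish the three-way equivalence by proving the cycle (iii)$\Rightarrow$(ii)$\Rightarrow$(i)$\Rightarrow$(iii).

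The first implication is immediate: if $x=y$ then $x\sim_M y$ by definition, and otherwise the two bases furnished by (iii) supply the one-step chain $x_0=x,x_1=y$ required in the definition of $\sim_M$.

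For (ii)$\Rightarrow$(i), I would exploit the fact that if $X$ is a separator of $M$, then every base splits uniquely as $(B\cap X)\cup(B\setminus X)$, so the symmetric difference of any two bases decomposes as the disjoint union of a subset of $X$ and a subset of $E(M)\setminus X$. In particular, a two-element symmetric difference must lie entirely on one side. Applied with $X$ taken to be the component of $x_i$, this forces each consecutive pair $\{x_{i-1},x_i\}$ in a chain witnessing $x\sim_M y$ to sit inside a single component, and hence by transitivity the entire chain, so that $x$ and $y$ share a component.

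The main step is (i)$\Rightarrow$(iii). Assuming $x\ne y$, I first reduce to the connected case: if $C$ is the component containing $x$ and $y$, then $M$ splits as a direct sum of a delta-matroid on $C$ (necessarily connected, by minimality of $C$) and another on $E(M)\setminus C$, and producing bases of the summand on $C$ that differ by $\{x,y\}$ and adjoining any fixed base of the other summand yields the desired bases of $M$. Having reduced to $M$ connected, I would induct on $|E(M)|$. For the base case $|E(M)|=2$, Lemma~\ref{lem:disconn} gives at least two distinct bases; since $M$ is even and the ground set has only two elements, any two distinct bases differ exactly by $\{x,y\}$. For $|E(M)|\ge 3$, pick $z\in E(M)\setminus\{x,y\}$; connectedness together with $|E(M)|\ge 2$ rules out $z$ being a loop or a coloop, since either would make $\{z\}$ itself a separator. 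Theorem~\ref{thm: connectivity reduction} therefore guarantees that $M\setminus z$ or $M/z$ is connected, and the inductive hypothesis applied to that smaller connected even delta-matroid (which still contains both $x$ and $y$) supplies the required bases.

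The main obstacle is ensuring that the bases produced by induction lift back to bases of $M$ with the same two-element symmetric difference. For $M\setminus z$ this is immediate, since its bases are exactly the bases of $M$ not containing $z$. For $M/z$, the bases have the form $B\setminus\{z\}$ for $B\in\cB(M)$ containing $z$, so re-appending $z$ to each of the two inductive bases recovers bases of $M$ whose symmetric difference is still $\{x,y\}$, closing the induction and completing the proof.
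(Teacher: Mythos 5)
Your proof of \ref{item:equiv1}$\Rightarrow$\ref{item:equiv3}, the substantive implication, is essentially the paper's: reduce to the connected summand on the component containing $x$ and $y$, induct on $|E(M)|$ with the two-element base case, pick $z\notin\{x,y\}$, invoke Theorem~\ref{thm: connectivity reduction} to get a connected $M\setminus z$ or $M/z$, and lift the resulting bases back. (Your observation that connectivity together with $|E(M)|\ge 3$ already forbids $z$ from being a loop or a coloop lets you skip the paper's twisting step; that is fine.) The one place you genuinely diverge is \ref{item:equiv2}$\Rightarrow$\ref{item:equiv1}: the paper just cites Wenzel, whereas you argue directly --- a reasonable choice, but as written your argument has a small hole. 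The step ``the symmetric difference of any two bases decomposes as the disjoint union of a subset of $X$ and a subset of $E(M)\setminus X$; in particular, a two-element symmetric difference must lie entirely on one side'' does not follow as stated, since \emph{every} set decomposes that way, and a priori $B\symdiff B'$ could consist of one element of $X$ and one element of $E(M)\setminus X$. To rule this out you need a parity argument: since $X$ is a separator of the even delta-matroid $M$, the summand on $X$ is itself even, so $|B\cap X|$ has the same parity for every base $B$ (this is the easy, non-circular direction of Lemma~\ref{lem:sep}); hence $|(B\symdiff B')\cap X|$ is even and a two-element symmetric difference cannot straddle $X$. Equivalently, if it did straddle $X$, then exchanging only the $X$-parts of the two bases would produce a base at symmetric-difference distance one from $B$, contradicting evenness. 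With that one line supplied, your proof is complete and correct.
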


\begin{proof}%
    Due to Wenzel~{\cite[Proposition~4.3]{Wenzel1996}}, \ref{item:equiv2} implies~\ref{item:equiv1}, and~\ref{item:equiv3} implies~\ref{item:equiv2} obviously.
    Thus, it suffices to show that~\ref{item:equiv1} implies~\ref{item:equiv3}.

    Suppose that~\ref{item:equiv1} holds.
    We may assume that $|E(M)| \ge 2$.
    It suffices to show that for a connected even delta-matroid $M$ and distinct elements $x,y\in E(M)$, there are bases $B_1$ and $B_2$ of $M$ such that $B_1\symdiff B_2 = \{x,y\}$.
    We proceed by induction on $|E(M)|$.
    If $|E(M)| = 2$, then it is easy to check that either $\cB(M) = \{\emptyset,\{x,y\}\}$ or $\{\{x\},\{y\}\}$ and hence the claim holds.
    Thus, we may assume that $|E(M)| \ge 3$.
    Let $z\in E(M) \setminus \{x,y\}$.
    By twisting on $\{z\}$ if necessary, we may assume that $z$ is not a coloop.
    If $M\setminus z$ is connected, then by the induction hypothesis, there are bases $B_1$ and $B_2$ of $M\setminus z$ such that $B_1\symdiff B_2 = \{x,y\}$.
    Since $B_1$ and $B_2$ are also bases of $M$, the claim holds.
    Therefore, we may assume that $M\setminus z$ is not connected.
    Then $M/z$ is connected by Theorem~\ref{thm: connectivity reduction}.
    Note that $z$ is not a loop since otherwise $M/z = M \setminus z$.
    By the induction hypothesis, there are bases $B_1'$ and $B_2'$ of $M/z$ such that $B_1'\symdiff B_2' = \{x,y\}$.
    Since $B_1 := B_1'\cup\{z\}$ and $B_2 := B_2'\cup\{z\}$ are bases of $M$, the claim holds.
\end{proof}

We remark that Wenzel~\cite{Wenzel1996} showed that~\ref{item:equiv1} and~\ref{item:equiv2} in the above lemma are equivalent. 
As a corollary, we deduce the following.

\begin{lemma}\label{lem:sep}
    For an even delta-matroid $M$, a subset $X\subseteq E(M)$ is a separator if and only if $|X\cap B|$ has the same parity for every base $B$. 
\end{lemma}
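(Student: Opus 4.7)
The plan is to prove each direction of the biconditional separately, with the reverse direction leveraging Lemma~\ref{lem:equiv}.

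For the forward direction, I would assume $X$ is a separator and write $M = M_1 \oplus M_2$ where $E(M_1) = X$ and $E(M_2) = E(M) \setminus X$. Every base $B$ of $M$ decomposes uniquely as $B = B_1 \cup B_2$ with $B_i \in \cB(M_i)$, so $|X \cap B| = |B_1|$. I would first check that $M_1$ is even: for any fixed base $B_2 \in \cB(M_2)$ and any two bases $B_1, B_1' \in \cB(M_1)$, both $B_1 \cup B_2$ and $B_1' \cup B_2$ are bases of the even delta-matroid $M$, so their cardinalities share a parity, forcing $|B_1| \equiv |B_1'| \pmod 2$. Consequently $|X \cap B|$ has a fixed parity across all bases $B$ of $M$.

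For the reverse direction, I would assume that $|X \cap B|$ has the same parity for every base $B$ and show that $X$ is a union of components of $M$, which makes it a separator by iterating the direct-sum decomposition of $M$ over the component partition of $E(M)$. Since Lemma~\ref{lem:equiv} identifies the components with the equivalence classes of $\sim_M$, it suffices to show that $X$ is closed under $\sim_M$. Given $x \sim_M y$ with $x \ne y$, Lemma~\ref{lem:equiv}\,(iii) supplies bases $B_1, B_2$ of $M$ with $B_1 \symdiff B_2 = \{x,y\}$; say $x \in B_1 \setminus B_2$ and $y \in B_2 \setminus B_1$. Then
\[
    |X \cap B_1| - |X \cap B_2| = \mathbf{1}[x \in X] - \mathbf{1}[y \in X] \in \{-1,0,1\},
\]
and since the left-hand side is even by hypothesis, it must vanish, giving $x \in X \iff y \in X$ as required.

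The step that packages essentially all of the work is the appeal to Lemma~\ref{lem:equiv}\,(iii): the fact that $\sim_M$-equivalence between two elements in the same component can always be witnessed by \emph{a single} pair of bases differing in exactly those two elements. Without this reduction, one would have to chain many single-exchange steps and track the parity of $|X \cap B|$ through each, which would be delicate because after one symmetric exchange the exchanged pair need not sit in $X$ in a controlled way. With that lemma already established in the preliminaries, however, the parity computation above is immediate and the remaining verification that a union of components is a separator is routine.
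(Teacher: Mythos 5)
Your proof is correct and takes essentially the same route as the paper: both directions hinge on Lemma~\ref{lem:equiv}\,(iii) producing a pair of bases with symmetric difference $\{x,y\}$ straddling $X$, followed by the same one-line parity computation (the paper phrases the reverse direction contrapositively, but the argument is identical). Your forward direction just spells out details the paper dismisses as obvious.
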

\begin{proof}
    The forward direction is obvious from the definition.
    Now let us assume that $X$ is not a separator.
    Then $M$ has a component $C$ such that both $C\cap X$ and $C\setminus X$ are nonempty.
    Then by Lemma~\ref{lem:equiv}, there are bases $B$ and $B'$ such that $B\symdiff B' = \{x,y\}$ for some $x\in C\cap X$ and $y\in C\setminus X$.
    Then $|X\cap B| \not\equiv |X\cap B'| \pmod{2}$.
\end{proof}

Bouchet~\cite{Bouchet1987} defined circuits of symmetric matroids that are equivalent to delta-matroids.
Here we review symmetric matroids under the name of lifts of delta-matroids as in~\cite{Bouchet2001}.
Let $M=(E,\cB)$ be a delta-matroid and let $E^*:=\{e^*:e\in E\}$ be a disjoint copy of $E$.
Let $(\text{-})^*$ be an involution on $E\cup E^*$ that maps $x\in E$ to $x^*\in E^*$.
For a subset $X \subseteq E\cup E^*$, we write $X^* := \{x^*:x\in X\}$.
The \emph{lift} of $M$ is a pair $\lift(M):=(E\cup E^*, \cB')$ where $\cB' := \{B\cup(E\setminus B)^* : B\in \cB\}$.
We call each member in $\cB'$ a \emph{base} of $\lift(M)$.
A \emph{circuit} of $\lift(M)$ is a minimal subset $C \subseteq E\cup E^*$ such that $C\cap C^* = \emptyset$ and $C$ is not contained in any base of $\lift(M)$.

Duchamp showed that the following result in his thesis~\cite{Duchamp1991}; see~\cite{Duchamp1995}.
Its proof can be also found in~\cite{BMP2003,Bouchet1997}.

\begin{lemma}[Duchamp~\cite{Duchamp1995}]\label{lem:ortho}
    For every pair of circuits $C$ and $D$ of the lift of a delta-matroid, we have $|C \cap D^*| \ne 1$. 
\end{lemma}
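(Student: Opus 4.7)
The plan is to argue by contradiction using the symmetric exchange axiom (SEA) together with the minimality of the circuits. Suppose $C$ and $D$ are circuits of $\lift(M)$ with $|C\cap D^*|=1$. Observe that $|C\cap D^*|=|D\cap C^*|$ via the involution $x\mapsto x^*$, and that twisting $M$ by a subset of $E$ permutes circuits of $\lift(M)$ while preserving $|C\cap D^*|$. Using twisting, we may assume $C\cap D^*=\{e\}$ with $e\in E$, so $e\in C$ and $e^*\in D$. Writing $C_+=C\cap E$ and $C_-=(C\cap E^*)^*$, and analogously for $D$, the hypothesis becomes $e\in C_+\cap D_-$ with $C_-\cap D_+=\emptyset$.

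By the minimality of $C$, the set $C\setminus\{e\}$ lies in some base $B_1\cup(E\setminus B_1)^*$ of $\lift(M)$, which gives $C_+\setminus\{e\}\subseteq B_1$, $C_-\cap B_1=\emptyset$, and $e\notin B_1$ (otherwise $C$ itself would lie in the base). Analogously, there is $B_2\in\cB(M)$ with $D_+\subseteq B_2$, $(D_-\setminus\{e\})\cap B_2=\emptyset$, and $e\in B_2$. Among all such witness pairs (allowing the ``missing element'' of the $C$-covering to vary over $C_+$), choose $(B_1,B_2)$ minimizing $|B_1\symdiff B_2|$. Since $e\in B_2\setminus B_1$, this minimum is at least $1$. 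Apply SEA to $(B_1,B_2)$ at $e$: there exists $f\in B_1\symdiff B_2$ with $B_1':=B_1\symdiff\{e,f\}\in\cB(M)$, and in every case $e\in B_1'$ (for $f=e$ we get $B_1'=B_1\cup\{e\}$; for $f\ne e$ the two-swap adds $e$). We split on $f$:
\begin{enumerate}[label=\rm(\alph*)]
\item If $f=e$, or $f\in B_2\setminus B_1$ with $f\notin C_-$, or $f\in B_1\setminus B_2$ with $f\notin C_+$, then $C\subseteq B_1'\cup(E\setminus B_1')^*$, contradicting that $C$ is a circuit.
\item If $f\in B_1\setminus B_2$ with $f\in C_+$, then $(B_1',B_2)$ is a valid witness pair with the missing element of $C$ updated from $e$ to $f$, and $|B_1'\symdiff B_2|=|B_1\symdiff B_2|-2$, contradicting the minimality.
\item If $f\in B_2\setminus B_1$ with $f\in C_-$, the naive update of $B_1$ destroys $C_-\cap B_1'=\emptyset$. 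We then apply SEA to $(B_2,B_1)$ at $f$; using $C_-\cap D_+=\emptyset$ (which forces $f\notin D_+$), the resulting exchange either produces a base containing $D$ (contradicting $D$ being a circuit) or yields a new witness pair $(B_1,B_2')$ of strictly smaller symmetric difference, again contradicting the minimality.
\end{enumerate}

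The main obstacle is sub-case (c): handling the interplay between the sign-structures $(C_\pm,D_\pm)$ requires a carefully chosen second application of SEA on $B_2$, and perhaps further iterations. The essential input is $C_-\cap D_+=\emptyset$, which ensures that any $f\in C_-\cap B_2$ satisfies $f\notin D_+$, and hence that modifying $B_2$ to exclude $f$ is compatible with the $D$-witness property. Tracking the precise case analysis, ensuring termination of the iterated exchanges, and formulating the minimization so that ``missing element'' shifts are admissible form the technical heart of the argument; full details appear in~\cite{BMP2003,Bouchet1997}.
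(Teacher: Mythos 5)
The paper itself gives no proof of this lemma---it is quoted from Duchamp, with proofs indicated in \cite{BMP2003,Bouchet1997}---so your attempt has to stand on its own, and as written it does not. The overall strategy (twist to normalize $e\in E$, extract witness bases $B_1,B_2$ from the minimality of the circuits, then use \ref{sea} at $e$ together with an extremal choice of the pair) is the right flavour, and your reductions, the derivation of the properties of $B_1$ and $B_2$, and cases (a) and (b) of the first exchange are all correct. The first genuine gap is that the minimization is set up incoherently. You minimize $|B_1\symdiff B_2|$ over witness pairs in which the ``missing element'' of the $C$-covering ranges over $C_+$, but you then apply \ref{sea} at $e$, which needs $e\in B_1\symdiff B_2$. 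For a witness whose missing element is some $c\in C_+\setminus\{e\}$ one has $e\in C_+\setminus\{c\}\subseteq B_1$ and $e\in B_2$, so $e\notin B_1\symdiff B_2$ and the exchange at $e$ is unavailable; nothing in your setup even rules out $B_1=B_2$ for such a pair. If you instead fix the missing element to be $e$, then the pair produced in case (b) leaves the class and no contradiction with minimality is obtained. Since case (b) is precisely the step that forces the missing element to drift (and case (c) similarly produces a $C$-witness whose missing element lies in $(C_-)^*$, outside your allowed range), the extremal structure has to be designed around this drift, and it is not.

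The second gap is that case (c) is not actually resolved. The claimed dichotomy---the second exchange ``either produces a base containing $D$ or yields a new witness pair of strictly smaller symmetric difference''---omits a third outcome: the element $g$ returned by \ref{sea} applied to $(B_2,B_1)$ at $f$ may lie in $D_+\cap(B_2\setminus B_1)$ or in $(D_-\setminus\{e\})\cap(B_1\setminus B_2)$, in which case $B_2\symdiff\{f,g\}$ neither covers $D$ nor satisfies your $D$-witness conditions; it is a $D$-witness with a shifted missing element, reproducing on the $D$ side exactly the difficulty you faced on the $C$ side. You acknowledge this by deferring ``termination of the iterated exchanges'' and the correct formulation of the minimization to \cite{BMP2003,Bouchet1997}---the very references the paper already cites for this lemma. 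Since that is the technical heart of the statement, the proposal is a plausible sketch rather than a proof.
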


We remark that Duchamp~\cite{Duchamp1991,Duchamp1995} presented the cryptomorphic definition of the lift of a delta-matroid in terms of circuits, and its proof can be also found in~\cite{BMP2003,Bouchet1997}.
Furthermore, Booth, Moreira, and Pinto~\cite{BMP2003} gave a similar cryptomorphism for the lift of an even delta-matroid, which also follows from~{\cite[Proposition~5.4]{Bouchet1997}} and~{\cite[Theorem~4.2]{Bouchet2001}}.

\section{Pancyclicity}\label{sec:ham}

The next lemma is straightforward from the definitions.

\begin{lemma}\label{lem:components}
    Let $M$, $M_1$, $\ldots$, $M_k$ be even delta-matroids such that $M = M_1 \oplus \cdots \oplus M_k$.
    Then $BG(M)$ is isomorphic to $BG(M_1) \square \cdots \square BG(M_k)$.
    \qed
\end{lemma}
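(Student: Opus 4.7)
The plan is to establish the isomorphism by first matching the vertex sets via the direct sum decomposition, then verifying that the adjacency relations coincide, with the evenness hypothesis playing the essential role in the edge check.

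By induction on $k$, it suffices to handle the case $k=2$, so set $M=M_1\oplus M_2$ with disjoint ground sets $E_1$ and $E_2$. For the vertex bijection, observe that by the definition of direct sum, every base $B\in\cB(M)$ decomposes uniquely as $B=X_1\cup X_2$ with $X_i=B\cap E_i\in \cB(M_i)$, because $E_1\cap E_2=\emptyset$. The map $B\mapsto(X_1,X_2)$ is therefore a bijection from $V(BG(M))$ onto $V(BG(M_1))\times V(BG(M_2))=V(BG(M_1)\square BG(M_2))$.

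For the edge relation, take two bases $B=X_1\cup X_2$ and $B'=X_1'\cup X_2'$ of $M$. Using $E_1\cap E_2=\emptyset$, we get
\[
B\symdiff B' \;=\; (X_1\symdiff X_1')\cup (X_2\symdiff X_2'),
\]
a disjoint union, so $|B\symdiff B'|=|X_1\symdiff X_1'|+|X_2\symdiff X_2'|$. Here the evenness of $M$ enters: since $M_1$ and $M_2$ are even delta-matroids, any two bases of $M_i$ have cardinalities of the same parity, so $|X_i\symdiff X_i'|$ is even for $i=1,2$. Consequently $|B\symdiff B'|=2$ holds if and only if exactly one of the two terms equals $2$ and the other equals $0$, i.e.\ $X_i=X_i'$ for one index $i$ and $X_jX_j'\in E(BG(M_j))$ for the other. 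This is precisely the adjacency rule of $BG(M_1)\square BG(M_2)$, so the bijection above is a graph isomorphism.

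The only real subtlety — and the reason the lemma is stated for even delta-matroids rather than arbitrary delta-matroids — lies in excluding the case $|X_1\symdiff X_1'|=|X_2\symdiff X_2'|=1$, which would otherwise yield diagonal edges in $BG(M)$ that do not exist in the Cartesian product. Evenness rules this case out immediately, so there is no genuine obstacle and the lemma follows.
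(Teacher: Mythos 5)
Your proof is correct and is exactly the argument the paper has in mind: the lemma is stated with an omitted proof ("straightforward from the definitions"), and the intended verification is precisely your decomposition of bases across the disjoint ground sets together with the parity observation that evenness of each $M_i$ forces $|X_i\symdiff X_i'|$ to be even, ruling out the diagonal adjacencies. Nothing to add.
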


The basis graph of $(\{1,2\},\{\emptyset,\{1,2\}\})$ is isomorphic to $K_2$, and thus we deduce the following. 

\begin{lemma}\label{lem:cube}
    Let $M$ be an even delta-matroid.
    If every component of $M$ has size at most two, then $BG(M)$ is isomorphic to the hypercube graph $Q_d$ where $d$ is the number of $2$-element components of~$M$.
    \qed
\end{lemma}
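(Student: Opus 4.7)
The plan is to apply Lemma~\ref{lem:components} to reduce the statement to understanding the basis graph of each component of $M$ individually. Writing $M = M_1 \oplus \cdots \oplus M_k$ where the $M_i$ are the components of $M$, we get $BG(M) \cong BG(M_1) \square \cdots \square BG(M_k)$, and since $G \square K_1 \cong G$ and $K_2^{\,\square d} \cong Q_d$, it then suffices to show that $BG(M_i) \cong K_1$ if $|E(M_i)| = 1$ and $BG(M_i) \cong K_2$ if $|E(M_i)| = 2$.

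For a $1$-element component with ground set $\{e\}$, evenness forces all bases to have the same parity, so either $\cB = \{\emptyset\}$ or $\cB = \{\{e\}\}$. Either way the basis graph is the single vertex $K_1$. For a $2$-element component $N$ with ground set $\{a,b\}$, evenness similarly restricts $\cB(N)$ to lie inside either $\{\emptyset, \{a,b\}\}$ or $\{\{a\},\{b\}\}$. Because $N$ is a component of $M$, it is a connected delta-matroid with $|E(N)| = 2$, so Lemma~\ref{lem:disconn} gives $|\cB(N)| \geq 2$; hence $\cB(N)$ must equal one of those two sets, and its two bases differ in exactly two elements, yielding $BG(N) \cong K_2$.

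The only non-mechanical step is the invocation of Lemma~\ref{lem:disconn}, which rules out the degenerate possibilities $\cB = \{\emptyset\}$, $\{\{a,b\}\}$, $\{\{a\}\}$, $\{\{b\}\}$ on a $2$-element ground set; all of these correspond to delta-matroids that would further decompose into two $1$-element pieces, contradicting the assumption that the component has size exactly two. With that observation in hand the argument is otherwise a direct unpacking of the relevant definitions, so I expect no genuine obstacle beyond being careful with the parity bookkeeping.
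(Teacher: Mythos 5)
Your proposal is correct and follows exactly the route the paper intends: the paper derives Lemma~\ref{lem:cube} from Lemma~\ref{lem:components} together with the observation that a two-element component has basis graph $K_2$, and your argument simply spells out the parity and connectivity details (including the appeal to Lemma~\ref{lem:disconn}) that the paper leaves implicit. No issues.
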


We present a strengthening of Theorem~\ref{thm:main}.
A graph is \emph{strongly vertex pancyclic} if every vertex is incident with a pancyclic edge.

\begin{theorem}\label{thm:main-strong}
    Let $M$ be an even delta-matroid.
    Then the following hold:
    \begin{enumerate}[label=\rm(\roman*)]
        \item\label{main1} If $M$ has no component of size larger than two, then $BG(M)$ is isomorphic to $Q_d$, where $d$ is the number of components of size two in $M$.
        \item\label{main2} If $M$ has a component of size larger than two, then $BG(M)$ is strongly vertex pancyclic.
    \end{enumerate}
\end{theorem}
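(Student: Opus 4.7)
Part~(i) is immediate from Lemma~\ref{lem:cube}.

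For part~(ii), my plan is to derive it as a short corollary of Theorem~\ref{thm:main2}, which is proved separately later in this section. Fix a vertex $B$ of $BG(M)$, and let $C$ be a component of $M$ with $|C|\ge 3$. It suffices to produce an edge $BB'$ of $BG(M)$ with $B\symdiff B'\subsetneq C$: then $B\symdiff B'$ is a proper subset of the component $C$, so it is not itself a component of $M$, and Theorem~\ref{thm:main2}(ii) immediately asserts that $BB'$ is pancyclic, showing that $B$ is incident with a pancyclic edge.

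To construct such an edge, I use the component decomposition. Write $M=M_1\oplus\cdots\oplus M_k$ with $C=E(M_j)$, and decompose $B=B_1\cup\cdots\cup B_k$ where each $B_i$ is a base of $M_i$; this matches the Cartesian-product structure $BG(M)\cong BG(M_1)\square\cdots\square BG(M_k)$ from Lemma~\ref{lem:components}. Since $M_j$ is an even delta-matroid that is connected with $|E(M_j)|\ge 3$, the contrapositive of Lemma~\ref{lem:disconn} gives $|\cB(M_j)|\ge 2$. Picking any second base $B_j''\in\cB(M_j)$ and any element $e\in B_j\symdiff B_j''$, the exchange axiom for even delta-matroids (stated just before Wenzel's theorem) yields some $f\in(B_j\symdiff B_j'')\setminus\{e\}$ with $B_j\symdiff\{e,f\}\in\cB(M_j)$. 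Setting $B':=B\symdiff\{e,f\}$ then produces the desired neighbor of $B$ in $BG(M)$, with $B\symdiff B'=\{e,f\}\subsetneq C$.

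The main obstacle therefore does not lie in this reduction, but rather in the proof of Theorem~\ref{thm:main2}(ii) itself: constructing cycles of every length from $3$ up to $|\cB(M)|$ through a prescribed edge whose symmetric difference is properly contained in a component. I expect this will require either a recursive argument via deletion and contraction, using Lemma~\ref{lem:del and cont} together with Theorem~\ref{thm: connectivity reduction} to maintain connectedness through the induction, or else the alternative polytope approach, invoking Naddef's theorem together with Chepoi's metric characterization of basis graphs of even delta-matroids.
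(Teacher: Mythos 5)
Your proposal is correct and takes essentially the same route as the paper: part~(i) is read off from Lemma~\ref{lem:cube}, and part~(ii) is deduced from Theorem~\ref{thm:main2}\,(ii) by producing, at each base $B$, a neighbor $B'$ with $B\symdiff B'$ a two-element proper subset of a component of size at least three. The paper's version differs only cosmetically, applying Lemma~\ref{lem:disconn} and~\ref{sea} directly in $M$ rather than passing through the explicit direct-sum decomposition.
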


We prove Theorem~\ref{thm:main-strong} using Theorem~\ref{thm:main2}.

\begin{proof}[Proof assuming Theorem~\ref{thm:main2}]
    Lemma~\ref{lem:cube} implies~\ref{main1} and hence we now assume that $M$ has a component $X$ of size at least three.
    Let $B$ be a base of $M$. By Lemma~\ref{lem:disconn}, $M$ has a base $B' \ne B$ such that $B\symdiff B' \subseteq X$.
    By~\ref{sea}, we may assume that $|B \symdiff B'| \le 2$.
    Since $M$ is even, $|B \symdiff B'| = 2$.
    Then by Theorem~\ref{thm:main2}, an edge $BB'$ of $BG(M)$ is pancyclic, so we conclude~\ref{main2}.
\end{proof}

Now we provide two proofs of Theorem~\ref{thm:main2}, the first one is a direct proof from properties of even delta-matroids, and the second one uses the results of Naddef~\cite{Naddef1984} and Chepoi~\cite{Chepoi2007}.

\subsection{First proof of Theorem~\ref{thm:main2}}

To prove Theorem~\ref{thm:main2}, we review the following lemma, which is useful for dealing with some basic cases.

\begin{lemma}[Bondy and Ingleton~\cite{BI1976}]\label{lem: square product by complete}
    Let $G$ be a graph and $m \ge 3$ be an integer. %
    The following hold:
    \begin{enumerate}[label=\rm(\roman*)]
        \item 
        If $G$ is strongly vertex pancyclic, then every edge of the form $(u,x)(v,x)$ in $K_2 \square G$ is almost pancyclic but not pancyclic.
        \item 
        If $G$ has a Hamiltonian path starting from a vertex $x$, then every edge of the form $(u,x)(v,x)$ in $K_m \square G$ is pancyclic.
    \end{enumerate}
\end{lemma}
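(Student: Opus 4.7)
The plan is to handle parts~(i) and~(ii) separately by explicit cycle constructions in the product graph. Let $n=|V(G)|$ and, without loss of generality, assume $u=0$, $v=1$.

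\emph{Part (i).} First I would observe that $(0,x)(1,x)$ lies in no triangle of $K_2\square G$. Any common neighbor of $(0,x)$ and $(1,x)$ distinct from both must be a neighbor of $(0,x)$ via a G-edge, hence of the form $(0,w)$ for some $w$ adjacent to $x$ in $G$, and simultaneously a neighbor of $(1,x)$ via a G-edge, hence of the form $(1,w')$; but $(0,w)=(1,w')$ forces $0=1$, absurd. So the edge is not pancyclic. For the almost-pancyclic claim, strong vertex pancyclicity of $G$ provides a pancyclic edge $xy$ at $x$: for each $\ell\in\{3,\ldots,n\}$ some $\ell$-cycle of $G$ contains $xy$, and removing $xy$ from it yields an $xy$-path of length $\ell-1$. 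Together with the edge $xy$ itself, $G$ contains $xy$-paths of every length in $\{1,\ldots,n-1\}$. Given $L\in\{4,\ldots,2n\}$, write $L-2=i+j$ with $i,j\in\{1,\ldots,n-1\}$, pick $xy$-paths $P_1,P_2$ in $G$ of lengths $i,j$, and form the closed walk that goes from $(0,x)$ to $(1,x)$, traces $P_1$ in the $1$-layer to $(1,y)$, crosses to $(0,y)$, and traces $P_2$ backwards in the $0$-layer to $(0,x)$. Since the two layers of $K_2\square G$ are vertex-disjoint, this is a simple cycle of length $i+j+2=L$.

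\emph{Part (ii).} Let $H=x=h_0,h_1,\ldots,h_{n-1}$ be a Hamiltonian path of $G$ starting at $x$. For $L\in\{3,\ldots,2n+1\}$ I would use snake cycles confined to rows $0,1,2$ of $K_m$: the case $L=3$ is the triangle $(0,x),(1,x),(2,x)$; each even $L=2k$ with $k\in\{2,\ldots,n\}$ comes from
\[
(0,x),(1,x),(1,h_1),\ldots,(1,h_{k-1}),(0,h_{k-1}),\ldots,(0,h_1),(0,x);
\]
each odd $L=2k+1$ with $k\in\{2,\ldots,n\}$ comes from
\[
(0,x),(1,x),(2,x),(2,h_1),\ldots,(2,h_{k-1}),(0,h_{k-1}),\ldots,(0,h_1),(0,x).
\]
For $L\in\{2n+2,\ldots,mn\}$ I would start from such a base cycle and insert detours through higher rows of $K_m$: at a K-edge $(i,p)(j,p)$ of the cycle, a single-vertex detour replaces it by $(i,p),(k,p),(j,p)$ for some row $k\notin\{i,j\}$ with $(k,p)$ unused, adding exactly one vertex; at a G-edge $(i,p)(i,q)$, a two-vertex detour replaces it by $(i,p),(k,p),(k,q),(i,q)$ for some row $k\ne i$ with both $(k,p),(k,q)$ unused, adding two vertices. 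By combining such insertions judiciously, every length up to $mn$ is realized.

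\emph{Main obstacle.} The delicate step is the long-cycle regime of part~(ii), where one must verify that (a)~the detour insertions can always be arranged so that no vertex is reused, and (b)~every length $L\in\{2n+2,\ldots,mn\}$ is actually attained rather than skipped. I anticipate handling this by induction on $L$, with the choice of detour at each step dictated by the row occupancies of the current cycle, so that single-vertex and two-vertex insertions can be alternated to cover both parities. By contrast, part~(i) is structurally straightforward once the vertex-disjointness of the two layers of $K_2\square G$ is exploited.
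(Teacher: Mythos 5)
The paper does not prove this lemma at all---it is quoted from Bondy and Ingleton \cite{BI1976}---so there is no in-paper argument to compare against; I am judging your attempt on its own merits. Part (i) is complete and correct: the observation that $(0,x)$ and $(1,x)$ have no common neighbour in $K_2\square G$ rules out triangles through the edge, and splicing an $x$--$y$ path of length $i$ into layer $1$ and one of length $j$ into layer $0$ (all lengths $1\le i,j\le n-1$ being available because $x$ lies on a pancyclic edge $xy$) yields a cycle of each length $i+j+2\in\{4,\dots,2n\}$ through $(0,x)(1,x)$. Likewise the base cycles in part (ii) correctly give all lengths $3\le L\le 2n+1$.

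The genuine gap is the regime $L\in\{2n+2,\dots,mn\}$ in part (ii), which is precisely the half of the statement you defer: you assert that single-vertex and two-vertex detours can be ``combined judiciously,'' but the two points you postpone---that a suitable detour is always available without reusing vertices, and that no length is skipped---are the entire content of that case. The difficulty is concrete: to increase the length by exactly one you need a single-vertex detour, which requires a $K_m$-edge of the current cycle, other than the protected edge $(0,x)(1,x)$, lying in a column with a free row; whether such an edge exists depends on the order in which earlier detours were inserted (for example, greedily filling the last column first can exhaust it and leave the remaining $K_m$-edges with no free rows). An induction on $L$ therefore needs an explicit invariant, which you have not supplied. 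A cleaner way to close the gap is to avoid the induction: for each $k\in\{1,\dots,n\}$ and each choice of $s_0,\dots,s_{k-1}\in\{2,\dots,m\}$, build one explicit cycle through $(0,x)(1,x)$ that uses exactly the columns $x=h_0,\dots,h_{k-1}$ and exactly $s_j$ rows in column $h_j$ (a boustrophedon whose return leg runs along row $0$ and whose outward leg covers the remaining chosen rows of each column, using the completeness of each column to enter and leave at convenient rows). This realises every length in $[2k,mk]$; the case $k=1$ gives $[3,m]$ by edge-pancyclicity of $K_m$, and since $m\ge 3$ consecutive intervals $[2k,mk]$ and $[2k+2,(k+1)m]$ overlap, so all lengths $3,\dots,mn$ are covered. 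Either supply such a direct construction or equip your detour induction with an invariant guaranteeing an available single-vertex insertion at every step.
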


We say an edge $e$ of an $n$-vertex graph $G$ is \emph{even pancyclic} if $G$ has a length-$k$ cycle containing $e$ for each 
$k \in \{m\in\bZ : 3\le m\le n, \text{ ($m$ is even or $m=n$)}\}$.
One can guarantee that an edge of a graph is almost pancyclic by examining the edge pancyclicity of two subgraphs as follows.
This idea is written in the proof of Theorem~2 of Bondy and Ingleton~\cite{BI1976}.

\begin{lemma}[Bondy and Ingleton~{\cite[Proof of Theorem~2]{BI1976}}]\label{lem: larger pancyclic}
    Let $G$ be a graph and let $G_1$ and $G_2$ be its subgraphs such that $V(G)$ is partitioned into $V(G_1)$ and $V(G_2)$, and $|V(G_1)|\ge 3$.
    Let $x_iy_i \in E(G_i)$ for $i=1,2$.
    If $x_1y_1$ is pancyclic in~$G_1$, $x_2y_2$ is even pancyclic in~$G_2$, and $x_1x_2,y_1y_2\in E(G)$, then $x_1x_2$ is almost pancyclic in $G$.
\end{lemma}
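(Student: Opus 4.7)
The plan is, for each target length $k$ with $4 \le k \le |V(G)|$, to construct a cycle of length $k$ through the edge $x_1x_2$ that also uses the other crossing edge $y_1y_2$. Such a cycle must take the form
\[
    x_1 \to x_2 \xrightarrow{\, P_2 \,} y_2 \to y_1 \xrightarrow{\, P_1 \,} x_1,
\]
where $P_1$ is a path from $y_1$ to $x_1$ in $G_1$ and $P_2$ is a path from $x_2$ to $y_2$ in $G_2$. Writing $a$ and $b$ for the numbers of edges of $P_1$ and $P_2$, the cycle has length $a+b+2$, and since $V(G_1)\cap V(G_2)=\emptyset$ any such pair $P_1, P_2$ concatenates into a simple cycle.

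Set $n_i := |V(G_i)|$. I would first inventory the achievable values of $a$ and $b$. From the pancyclicity of $x_1y_1$ in $G_1$, deleting the edge $x_1y_1$ from a length-$\ell$ cycle through it yields a $y_1$--$x_1$ path in $G_1$ of length $\ell-1$; together with the trivial path $x_1y_1$, this realizes every value $a\in\{1,2,\ldots,n_1-1\}$. From the even pancyclicity of $x_2y_2$ in $G_2$, the analogous manipulation supplies an $x_2$--$y_2$ path in $G_2$ of every length $b$ in
\[
    S_2 \;:=\; \bigl(\{1,3,5,\ldots\}\cap[1,n_2-1]\bigr)\cup\{n_2-1\},
\]
where the isolated contribution $n_2-1$ comes from the Hamiltonian cycle through $x_2y_2$ in $G_2$ (which is part of the even-pancyclicity hypothesis regardless of the parity of $n_2$).

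It then suffices to check that every $k\in[4,n_1+n_2]$ admits a representation $k=a+b+2$ with $a\in[1,n_1-1]$ and $b\in S_2$. I would split into three cases: for $k\in[4,n_1+2]$ take $b=1$ and $a=k-3$; for $k\in[n_2+2,n_1+n_2]$ take $b=n_2-1$ and $a=k-n_2-1$; and for the intermediate range $[n_1+3,n_2+1]$ (which is nonempty only when $n_2\ge n_1+2$) choose $b$ to be the largest odd integer with $b\le k-3$ and set $a=k-2-b\in\{1,2\}$. The hypothesis $n_1\ge 3$ is exactly what is needed in this last case to guarantee $a\in\{1,2\}\subseteq[1,n_1-1]$. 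This routine but slightly fiddly case analysis — checking that the three intervals of reachable cycle lengths actually cover $[4,n_1+n_2]$ without gaps — is the only nontrivial step; once it is done, the cycles constructed above witness that $x_1x_2$ is almost pancyclic in $G$.
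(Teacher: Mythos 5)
Your proof is correct and complete; the coverage check of the three ranges $[4,n_1+2]$, $[n_1+3,n_2+1]$, $[n_2+2,n_1+n_2]$ goes through (the first and last already abut when $n_2\le n_1+1$, and the hypothesis $n_1\ge 3$ is used exactly where you say, to absorb $a\in\{1,2\}$ in the middle range). The paper itself states this lemma with only a citation to Bondy and Ingleton and gives no proof, and your argument — splicing a $y_1$--$x_1$ path of each length in $[1,n_1-1]$ with an $x_2$--$y_2$ path of each length in $S_2$ via the two crossing edges — is exactly the intended standard construction.
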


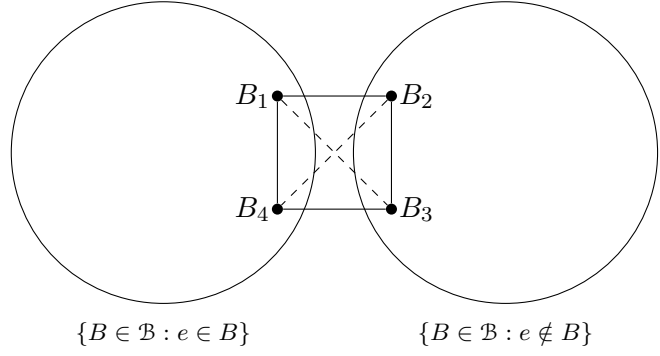
\begin{figure}
    \centering
    \begin{tikzpicture}
        \begin{scope}
            \draw (0,0) circle (2cm);
            \draw (4.5,0) circle (2cm);
            \node[shape=circle,fill=black, scale=0.40] (B1) at (1.5,0.75) {};
            \node[shape=circle,fill=black, scale=0.40] (B2) at (3,0.75) {};
            \node[shape=circle,fill=black, scale=0.40] (B3) at (3,-0.75) {};
            \node[shape=circle,fill=black, scale=0.40] (B4) at (1.5,-0.75) {};
            \draw (B1) -- (B2) -- (B3) -- (B4) -- (B1);
            \draw[dashed] (B1) -- (B3);
            \draw[dashed] (B2) -- (B4);

            \node at (1.17,0.75) {$B_1$};
            \node at (1.17,-0.75) {$B_4$};
            \node at (3.33,0.75) {$B_2$};
            \node at (3.33,-0.75) {$B_3$};

            \node at (0,-2.4) {\footnotesize{$\{B \in \cB : e\in B\}$}};
            \node at (4.5,-2.4) {\footnotesize{$\{B \in \cB : e\notin B\}$}};
        \end{scope}
    \end{tikzpicture}
    \caption{An illustration of $BG(M)$ in the proof of Theorem~\ref{thm:main2}}
    \label{fig: reduction}
\end{figure}

\begin{lemma}
    If an edge of the basis graph of an even delta-matroid is almost pancyclic, then it is even pancyclic.
\end{lemma}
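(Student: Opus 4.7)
The plan is to split on $n := |V(BG(M))|$ and to observe that the implication is tautological for $n \ge 4$ and vacuous for $n \le 2$, so the only substantive case is $n = 3$, which can be handled by a short argument using \ref{sea'}.

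For $n \ge 4$, every integer $k$ in $\{m \in \bZ : 3 \le m \le n,\ m \text{ even or } m = n\}$ lies in $\{4, 5, \ldots, n\}$, since every even integer at least $3$ is at least $4$ and $n \ge 4$. Almost pancyclicity supplies a $k$-cycle through the edge for each such $k$, which is precisely the even pancyclic condition. For $n \le 2$ the set of lengths required for even pancyclicity is empty, so there is nothing to prove.

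The genuinely substantive case is $n = 3$. Here the hypothesis of almost pancyclicity is vacuous (no integer $k$ satisfies $4 \le k \le 3$), and the claim reduces to showing that every edge of $BG(M)$ lies in a triangle, equivalently that $BG(M) \cong K_3$. I would first observe that $BG(M)$ is connected: iterated application of \ref{sea} decreases $|B \symdiff B'|$ by two at each step and eventually links any two bases. A connected $3$-vertex graph with an edge is $K_3$ or the path $P_3$, so it suffices to rule out $P_3$. Suppose for contradiction that $BG(M) = P_3$ with bases $B_1, B_2, B_3$ and edges $B_1B_2, B_2B_3$. Write $B_1 \symdiff B_2 = \{a,b\}$ and $B_2 \symdiff B_3 = \{c,d\}$. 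Because $B_1B_3$ is a non-edge and $|B_1 \symdiff B_3|$ is a nonzero even integer, we must have $|B_1 \symdiff B_3| = 4$ and $\{a,b\} \cap \{c,d\} = \emptyset$. Apply \ref{sea'} to $B_1, B_3$ and the element $a \in B_1 \symdiff B_3$: there must exist $f \in \{b,c,d\}$ with both $B_1 \symdiff \{a,f\}$ and $B_3 \symdiff \{a,f\}$ in $\cB(M) = \{B_1,B_2,B_3\}$. A direct check disposes of each choice: for $f = b$ the first symmetric difference is $B_2$ but the second equals $B_1 \symdiff \{c,d\}$, which is not among $B_1, B_2, B_3$; for $f = c$ or $f = d$ the first symmetric difference $B_1 \symdiff \{a,f\}$ is already not among $B_1, B_2, B_3$. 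This contradiction forces $BG(M) = K_3$.

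The main obstacle is precisely this $n = 3$ bookkeeping. Note that \ref{sea} alone does not suffice to exclude the $P_3$ configuration above (with $e = a$, the choice $f = b$ does satisfy \ref{sea}); the stronger axiom \ref{sea'} is essential, and this is arguably the one real content of the lemma.
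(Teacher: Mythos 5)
Your proof is correct and follows essentially the same route as the paper's: reduce to the three-base case and apply~\ref{sea'} to a pair of bases at symmetric difference four to derive a contradiction, forcing $BG(M)\cong K_3$. One quibble with your closing remark: \ref{sea} alone already excludes the $P_3$ configuration, since applying it to $B_1,B_3$ with $e=c$ fails (none of $B_1\symdiff\{c,f\}$ for $f\in\{a,b,c,d\}$ lies in $\{B_1,B_2,B_3\}$), so \ref{sea'} is convenient there but not essential.
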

\begin{proof}
    Let $M$ be an even delta-matroid.
    We may assume that $M$ has exactly three bases, since every almost pancyclic edge in a graph with at least four vertices or at most two vertices is even pancyclic.
    Suppose that $M$ has bases $B_1$ and $B_2$ such that $|B_1\symdiff B_2| \ge 4$.
    Then by~\ref{sea'}, there exist distinct $x,y \in B_1\symdiff B_2$ such that 
    $B_1\symdiff\{x,y\}$ and $B_2\symdiff\{x,y\}$ are distinct bases of $M$.
    Hence $M$ has at least four bases, a contradiction.
    Therefore, 
    $BG(M) \cong K_3$.
    Thus, every edge of $BG(M)$ is pancyclic.
\end{proof}

\begin{lemma}\label{lem:conn3}
    Let $M$ be a connected even delta-matroid on $\{1,2,3\}$.
    Then for some $X\subseteq \{1,2,3\}$,
    the set of bases of $M\symdiff X$ is equal to either $\{\{1\},\{2\},\{3\}\}$ or $\{\{1\},\{2\},\{3\},\{1,2,3\}\}$.
\end{lemma}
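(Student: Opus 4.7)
The plan is to normalize $M$ by twisting and then perform a short case analysis. A key preliminary observation is that twisting preserves both evenness (noted earlier in the text) and connectedness: by Lemma~\ref{lem:sep}, a subset $X$ is a separator of $M$ if and only if $|X \cap B|$ has constant parity across bases $B$, and since $|X \cap (B \symdiff Y)| \equiv |X \cap B| + |X \cap Y| \pmod{2}$, twisting by any $Y$ preserves the family of separators. In particular, $M \symdiff Y$ is connected whenever $M$ is.

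Since $M$ is even, all bases share a common cardinality parity. By twisting by a single element if necessary, I may assume all bases of $M$ have odd cardinality, so $\cB(M) \subseteq \{\{1\},\{2\},\{3\},\{1,2,3\}\}$. Moreover, since $M$ is connected and $|E(M)| = 3 > 1$, no element $i$ can be a loop or a coloop: in either case $|\{i\} \cap B|$ is constantly $0$ or $1$, so by Lemma~\ref{lem:sep}, $\{i\}$ would be a proper nonempty separator of $M$, contradicting that $E(M)$ is a minimal nonempty separator. Hence for each $i \in \{1,2,3\}$, some base of $M$ contains $i$ and some base omits $i$.

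I then split on whether $\{1,2,3\} \in \cB(M)$. If $\{1,2,3\} \notin \cB(M)$, every base is a singleton, and the no-loop condition forces $\{i\} \in \cB(M)$ for every $i$, so $\cB(M) = \{\{1\},\{2\},\{3\}\}$. If $\{1,2,3\} \in \cB(M)$, the no-coloop condition requires, for each $i$, a base omitting $i$, which must be a singleton $\{j\}$ with $j \neq i$; hence $\cB(M)$ contains at least two of $\{1\}, \{2\}, \{3\}$. If all three are present, $\cB(M) = \{\{1\},\{2\},\{3\},\{1,2,3\}\}$. If only two are present, say $\{i\},\{j\} \in \cB(M)$ but $\{k\} \notin \cB(M)$, then twisting by $\{i,j\}$ sends $\{i\} \mapsto \{j\}$, $\{j\} \mapsto \{i\}$, and $\{1,2,3\} \mapsto \{k\}$, reducing to $\cB = \{\{1\},\{2\},\{3\}\}$.

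The main obstacle is simply organizing the case analysis cleanly; the delicate step is the parity-preservation identity for separators under twisting, but once loops and coloops are ruled out by connectedness, the remaining enumeration is immediate.
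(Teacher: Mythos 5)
Your proof is correct and follows essentially the same route as the paper's: twist so that all bases are singletons or the full set, use connectedness (via separators) to rule out loops and coloops, and enumerate the remaining possibilities, with one final twist to handle the $\{\{i\},\{j\},\{1,2,3\}\}$ case. Your explicit verification that twisting preserves connectedness is a point the paper leaves implicit, but otherwise the arguments coincide.
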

\begin{proof}
    By twisting, we may assume that a set $\{1\}$ is a base of $M$.
    Since $M$ is connected, $1$ is not a coloop of $M$ and so $M$ has a base not containing $1$.
    Because $M$ is even, such a base is $\{x\}$ for some $x\in \{2,3\}$.
    Let $y :=5-x$.
    As $y$ is not a loop, $M$ has a base containing $y$.
    Thus, $\{y\}$ or $\{1,x,y\}=\{1,2,3\}$ is a base of $M$ because $M$ is even.
    Then $\cB(M)$ is $\{\{1\}, \{2\}, \{3\}\}$, $\{\{1\},\{2\},\{3\},\{1,2,3\}\}$, or $\{\{1\},\{x\},\{1,2,3\}\}$.
    In the last case, $\cB(M\symdiff\{1,x\}) = \{\{1\},\{2\},\{3\}\}$.
\end{proof}

Now, we are ready to prove Theorem~\ref{thm:main2}.

\begin{proof}[Proof of Theorem~\ref{thm:main2}]
    We proceed by induction on $|E(M)|$.
    Let $e$ and $f$ be the elements of $B_1 \symdiff B_2$.
    If $|E(M)| = 3$, then $M$ is connected and by Lemma~\ref{lem:conn3}, $BG(M) \cong K_3$ or $K_4$ since twisting preserves the basis graph up to isomorphisms.
    Therefore, we may assume that $|E(M)| \ge 4$.

    By twisting, we may assume that $B_1 = B_2 \cup \{e,f\}$. %
    By Lemma~\ref{lem:equiv}, there exists a component $X$ of $M$ containing $\{e,f\}$, and we denote by $M = N\oplus M'$ such that $E(N) = X$.
    Note that by Lemma~\ref{lem:sep}, $X = \{e,f\}$ if and only if there is no base including exactly one of $e$ and $f$.

    (i) We first assume that $X = \{e,f\}$.
    Then $BG(N) \cong K_2$ and so
    by Lemma~\ref{lem:components}, $BG(M)$ is isomorphic to $K_2 \square BG(M')$.
    In addition, $M'$ has a component $Y$ of size at least three.
    Hence for every base $B$ of $M'$, there is a base $B'$ of $M'$ such that $B\symdiff B' \subseteq Y$.
    By the induction hypothesis, $BB'$ is pancyclic in $BG(M')$, implying that $BG(M')$ is strongly vertex pancyclic.
    Therefore, by Lemma~\ref{lem: square product by complete}(i), $B_1B_2$ is almost pancyclic but not pancyclic.

    (ii)
    Now we assume that $\{e,f\} \subsetneq X$. 
    By Lemma~\ref{lem:sep} and symmetry, we may assume that $M$ has a base $A$ such that $e\notin A$ and $f\in A$.
    By the exchange axiom~\ref{sea} applied to 
    $e \in B_1\symdiff A$, there is $g\in (B_1 \symdiff A)\setminus\{e\}$ such that $B_1 \symdiff \{e,g\}$ is a base of $M$.
    Note that $e,f,g$ are distinct because $f\notin B_1\symdiff A$.
    By twisting $\{g\}$ if necessary, we may assume that $g \notin B_1$.

    Suppose that $|X|=3$.
    By Lemma~\ref{lem:conn3}, $BG(N) \cong K_m$ for some $m\in \{3,4\}$ and thus $BG(M) \cong K_m \square BG(M')$. %
    If $M'$ has a component of size at least three, then by the induction hypothesis, $BG(M')$ has a Hamiltonian path starting from $B_2 \cap E(M')$.
    If every component of $M'$ has size at most two, then by Lemma~\ref{lem:cube}, $BG(M')$ has a Hamiltonian path starting from $B_2 \cap E(M')$.
    Hence $B_1B_2$ is pancyclic by Lemma~\ref{lem: square product by complete}(ii).
    Therefore, we may assume that $|X| \ge 4$.

    \begin{claim}
    There are bases $B_3$ and $B_4$ of $M$ such that
    \begin{enumerate}[label=(a\arabic*)]
        \item $e\in B_4 \ne B_1$ and $e \notin B_3 \ne B_2$;
        \item $B_{i}B_{i+1}$ is an edge in $BG(M)$ for each $i=1,2,3,4$, where $B_5 := B_1$; and
        \item $B_1B_3$ or $B_2B_4$ is an edge in $BG(M)$; see Figure~\ref{fig: reduction}.
    \end{enumerate}
    \end{claim}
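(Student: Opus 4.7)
The first step is to set $B_3 := B_1 \symdiff \{e, g\}$, which is a base by the application of~\ref{sea} that produced $g$. An immediate computation gives $e \notin B_3$, $B_3 \symdiff B_2 = \{f, g\}$ (so $B_2 B_3$ is an edge of $BG(M)$ and $B_3 \ne B_2$), and $B_3 \symdiff B_1 = \{e, g\}$ (so $B_1 B_3$ is an edge). Thus $B_3$ already supplies its half of (a1), the edge $B_2 B_3$ for (a2), and the diagonal $B_1 B_3$ for (a3); it only remains to find a suitable $B_4$.

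To produce $B_4$, I apply Wenzel's stronger exchange axiom~\ref{sea'} to the pair $(B_1, A)$ at the element $g \in B_1 \symdiff A$. This yields $h \in (B_1 \symdiff A) \setminus \{g\}$ such that both $B_1 \symdiff \{g, h\}$ and $A \symdiff \{g, h\}$ are bases. In the favorable case $h \ne e$, set $B_4 := B_1 \symdiff \{g, h\}$. Then $e \in B_4$ (since $e \notin \{g, h\}$), $B_4 \ne B_1$, the edge $B_1 B_4$ has symmetric difference $\{g, h\}$, and $B_3 \symdiff B_4 = \{e, g\} \symdiff \{g, h\} = \{e, h\}$ is of size two, giving the edge $B_3 B_4$. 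All three conditions (a1)--(a3) hold.

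I expect the main obstacle to be the case $h = e$, when the exchange is forced to reproduce only $B_3$ together with the auxiliary base $A_1 := A \symdiff \{e, g\}$ (which contains $e$ and $f$ but not $g$). My plan splits here according to whether $A \ne B_3$ or $A = B_3$. When $A \ne B_3$, so $|B_1 \symdiff A| \ge 4$, I iterate by applying~\ref{sea'} to $(B_3, A_1)$, noting $B_3 \symdiff A_1 = B_1 \symdiff A$, with a starting element in $(B_1 \symdiff A) \setminus \{e, g\}$ to extract $z \ne e, g$ for which $B_1 \symdiff \{g, z\}$ is a base. When $A = B_3$, I switch to building the other diagonal: invoking Lemma~\ref{lem:sep} together with $|X| \ge 4$, I obtain a base $A'$ with $e \in A'$ and $f \notin A'$, apply~\ref{sea} to $(B_1, A')$ at $f$ to produce $B_4 := B_1 \symdiff \{f, w\}$ for some $w \ne e, f$, and then find a compatible $B_3' := B_2 \symdiff \{w, z\}$ adjacent to both $B_2$ and $B_4$, closing the 4-cycle with the alternative diagonal $B_2 B_4$ instead. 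The technical crux of the argument is to show that at least one of these routes always succeeds, which requires carefully leveraging the connectedness of $X$ and the hypothesis $|X| \ge 4$.
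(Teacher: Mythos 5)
Your ``favorable case'' is correct and clean: applying \ref{sea'} to $(B_1,A)$ at $g$ and landing on $h\ne e$ does give $B_4=B_1\symdiff\{g,h\}$ with $e\in B_4$, $B_1B_4$ and $B_3B_4$ edges, and the diagonal $B_1B_3$ already in place. The problems are in the fallback when $h=e$, and they are genuine. First, in the sub-case $A\ne B_3$, applying \ref{sea'} to $(B_3,A_1)$ at a starting element $u\in(B_1\symdiff A)\setminus\{e,g\}$ produces a base $B_3\symdiff\{u,v\}$ for some $v$ that the axiom hands you --- you do not get to choose it. Since $e\notin B_3$ and $u\ne e$, this base contains $e$ only if $v=e$, and nothing forces $v=e$; if $v\ne e$ the resulting base is useless as a $B_4$ (it misses $e$ and is not adjacent to $B_1$). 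So the claim that you can ``extract $z\ne e,g$ for which $B_1\symdiff\{g,z\}$ is a base'' is unsupported. Indeed, having fixed $B_3=B_1\symdiff\{e,g\}$ and the diagonal $B_1B_3$, any admissible $B_4$ must have the exact form $B_1\symdiff\{g,z\}$ with $z\notin\{e,g\}$, and you have no mechanism that produces such a base; note that the paper's own case analysis sometimes has to abandon $B_1\symdiff\{e,g\}$ as the choice of $B_3$ (e.g.\ its sub-case $x=f$ uses $B_3=B_1\symdiff\{e,h\}$), which suggests your rigid choice cannot always be completed.

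Second, and more seriously, in the sub-case $A=B_3$ you invoke Lemma~\ref{lem:sep} to obtain a base $A'$ with $e\in A'$ and $f\notin A'$. Lemma~\ref{lem:sep} only says that, since $\{e,f\}$ is not a separator, \emph{some} base meets $\{e,f\}$ in odd parity; that base may always be of the type ``contains $f$ but not $e$'' (this is exactly how $A$ was obtained, ``by symmetry''). The existence of a base containing $e$ but not $f$ is precisely the dichotomy the paper's proof is organized around: its entire first case treats the situation where no such $A'$ exists, and resolving it requires passing to the lift, observing that $\{e,f^*\}$ is then a circuit, and using the orthogonality Lemma~\ref{lem:ortho} to produce a base containing all of $e,f,g$. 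Your proposal has no substitute for this step, so the case ``no base contains $e$ without $f$'' is simply not handled. Until both of these holes are filled, the proof is incomplete.
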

    \begin{subproof}
    First, suppose that there is no base $A'$ of $M$ such that $e \in A'$ and $f\notin A'$.
    So there is no base of $\lift(M)$ which contains $\{e,f^*\}$.
    Because $e\in B_1 \cup (E(M)\setminus B_1)^*$ and $f^* \in B_2 \cup (E(M)\setminus B_2)^*$,
    neither $\{e\}$ nor $\{f^*\}$ is a circuit of $\lift(M)$.
    Thus, $\{e,f^*\}$ is a circuit of $\lift(M)$.
    By Lemma~\ref{lem:ortho}, $\{e,f,g\}$ is not a circuit of $\lift(M)$ and hence $M$ has a base $A''$ such that $e,f,g\in A''$.
    Recall that
    \[
        B_1 \cap \{e,f,g\} = \{e,f\}
        \text{ and }
        B_2 \cap \{e,f,g\} = \emptyset.
    \]
    Applying~\ref{sea} to 
    $g \in B_1 \symdiff A''$, there is $h \in (B_1 \symdiff A'') \setminus \{g\}$ such that $B_1 \symdiff \{g,h\}$ is a base.
    Because $e,f\notin B_1\symdiff A''$, four elements $e,f,g,h$ are distinct.
    Since $B_1\symdiff\{e,g\} = B_2 \symdiff\{f,g\}$, we deduce that $B_1B_2(B_1\symdiff\{e,g\})(B_1\symdiff\{g,h\})$ is a length-$4$ cycle in $BG(M)$ and $B_1(B_1\symdiff\{e,g\})$ is an edge of $BG(M)$.
    Hence the claim holds by taking $B_3 = B_1 \symdiff \{e,g\}$ and $B_4 = B_1 \symdiff \{g,h\}$.
    
    Therefore, we may assume that $M$ has a base $A'$ such that $e\in 
    A'$ and $f\notin A'$.
    By~\ref{sea} applied to 
    $f \in B_1 \symdiff A'$, there is $h \in (B_1 \symdiff A')\setminus\{f\}$ such that $B_1 \symdiff \{f,h\}$ is a base of~$M$.
    Then $h \neq e$ because $e\notin B_1 \symdiff A'$.
    If $g=h$, then by taking $B_3 = B_1 \symdiff \{e,g\} = B_2 \symdiff\{f,g\}$ and $B_4 = B_1 \symdiff \{f,g\} = B_3\symdiff\{e,f\}$ we achieve the desired properties.
    Hence we may assume that $g\ne h$.
    By~\ref{sea'} applied to $B_1 \symdiff \{e,g\}$ and $B_1 \symdiff \{f,h\}$, there is $x\in \{f,g,h\}$ such that both $B_1\symdiff\{e,g\}\symdiff\{e,x\}$ and $B_1\symdiff\{f,h\}\symdiff\{e,x\}$ are bases.
    
    If $x=g$, then we can take $B_3 = B_1\symdiff\{f,h\}\symdiff\{e,g\} = B_2 \symdiff \{g,h\}$ and $B_4 = B_1 \symdiff \{f,h\}$ to conclude the claim.
    If $x=h$, then we finish the proof by setting $B_3 = B_1 \symdiff\{e,g\} = B_2 \symdiff \{f,g\}$ and $B_4 = B_1 \symdiff\{g,h\}$.
    Finally, if $x=f$, then we can take $B_3 = B_1\symdiff\{f,h\}\symdiff\{e,f\} = B_1 \symdiff\{e,h\} = B_2 \symdiff\{f,h\}$ and $B_4 = B_1 \symdiff\{f,h\}$.
    \end{subproof}

    By (a2) and (a3), $B_1B_2$ is contained in a length-$3$ cycle of $BG(M)$.
    If $M/e$ has a component of size larger than two, then by the induction hypothesis, the edge $(B_1\setminus\{e\})(B_4\setminus\{e\})$ is almost pancyclic in $BG(M/e)$ and so it is even pancyclic.
    If $M/e$ has no component of size larger than two, then by Lemma~\ref{lem:cube}, $(B_1\setminus\{e\})(B_4\setminus\{e\})$ is even pancyclic in $BG(M/e)$.
    Similarly, the edge $B_2B_3$ is even pancyclic in $BG(M\setminus e)$.
    If $N/e$ is connected, then $E(N/e) = X\setminus\{e\}$ is a component of size at least three in $M/e$ and thus by the induction hypothesis, $(B_1\setminus\{e\})(B_4\setminus\{e\})$ is pancyclic in $BG(M/e)$.
    If $N\setminus e$ is connected, then similarly $B_2B_3$ is pancyclic in $BG(M\setminus e)$.
    By Theorem~\ref{thm: connectivity reduction}, $N/e$ or $N\setminus e$ is connected.
    Therefore, $B_1B_2$ is pancyclic in $BG(M)$ by Lemmas~\ref{lem:del and cont} and~\ref{lem: larger pancyclic}.
\end{proof}

\subsection{Second proof of Theorem~\ref{thm:main2}}\label{sec: second pf}

First, let us review the content of Naddef~\cite{Naddef1984}.
Let $E$ be a finite set and let $X \subseteq \{0,1\}^E$.
For $x,y\in \{0,1\}^E$, let $D(x,y) := \{i\in E: x_i \ne y_i\}$.
For $S\subseteq E$ and $x\in X$, let $x[S]:= (x_i:i\in S) \in \{0,1\}^S$ and let $X[S]:= \{y[S]:y\in X\}$.
A \emph{separator} of $X$ is a subset $S\subseteq E$ such that 
for every $x\in\{0,1\}^E$, $x\in X$ if and only if $x[S]\in X[S]$ and $x[E\setminus S]\in X[E\setminus S]$. 
A \emph{component} of $X$ is a minimal nonempty separator.
We say a component $C$ of $X$ is \emph{$k$-valued} if $|X[C]|=k$.

Observe that the convex hull of $X$ is a polytope whose vertices are $X$.
Let $G(X)$ be the graph on $X$ 
such that two vertices are adjacent if and only if they are joined by an edge in the convex hull of $X$. 
Note that if $D(u,v)$ is a minimal member of $\{D(x,y): x,y\in X,\; x\ne y\}$, then $uv$ is an edge in $G(X)$. %
A subset $X\subseteq \{0,1\}^E$ is \emph{nice} if for all $u,v\in X$, $uv\in E(G(X))$ if and only if $D(u,v)$ is a minimal member of $\{D(x,y): x,y\in X,\; x\ne y\}$.

Naddef~\cite{Naddef1984} showed the following result on $G(X)$.

\begin{theorem}[Naddef~{\cite[Proposition~2.5, Remark~3.7, and Theorem~3.10]{Naddef1984}}]\label{thm:Naddef}
    Let $X\subseteq \{0,1\}^E$ be a nice set and let $uv$ be an edge of $G(X)$.
    \begin{enumerate}[label=\rm(\roman*)]
        \item If $G(X)$ is bipartite, then it is a hypercube graph.
        \item If $G(X)$ is not bipartite and $D(u,v)$ is a $2$-valued component, then $uv$ is almost pancyclic but not pancyclic.
        \item If $G(X)$ is not bipartite and $D(u,v)$ is not a $2$-valued component, then $uv$ is pancyclic.
    \end{enumerate}
\end{theorem}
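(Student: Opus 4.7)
The plan is to decompose $X$ along its separators, reduce $G(X)$ to a Cartesian product of graphs associated with its components, and then analyze the indecomposable factors directly.

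\textbf{Reduction to indecomposable factors.} The first step is to show that if $S$ is a separator of $X$, then $X$ is in bijection with $X[S]\times X[E\setminus S]$, both $X[S]$ and $X[E\setminus S]$ are nice, and $G(X)\cong G(X[S])\,\square\,G(X[E\setminus S])$. The key observation driving all three statements is that minimality of $D(u,v)$ together with the separator property forces $D(u,v)\subseteq S$ or $D(u,v)\subseteq E\setminus S$: otherwise splicing $u[S]$ with $v[E\setminus S]$ yields a vertex of $X$ whose difference from $v$ is a proper subset of $D(u,v)$. Consequently edges of $G(X)$ respect the product decomposition, and minimal differences in $X$ restrict to minimal differences in each projection. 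Iterating over the components $C_1,\ldots,C_k$ of $X$ yields $G(X)\cong G(X_1)\,\square\,\cdots\,\square\,G(X_k)$ with each $X_i:=X[C_i]$ nice and \emph{indecomposable} (no proper nonempty separator). For the edge $uv$ in question, $D(u,v)$ lies in a unique component $C_i$, and $uv$ corresponds to an edge of $G(X_i)$ with the other coordinates fixed.

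\textbf{Core structural result and part~(i).} The heart of the argument is the claim: \emph{if $X$ is nice and indecomposable with $|X|\ge 3$, then every edge of $G(X)$ is pancyclic; in particular $G(X)$ is non-bipartite.} Non-bipartiteness I would obtain by producing a triangle: take two distinct minimal differences $T_1=D(u_1,v_1)$ and $T_2=D(u_2,v_2)$; indecomposability prevents the minimal-difference supports from splitting into two groups whose unions separate $E$, so one may arrange $T_1\cap T_2\ne\emptyset$, and niceness then supplies a third vertex $w$ with both $D(u_1,w)$ and $D(v_1,w)$ minimal and strictly contained in $T_1\cup T_2$, giving a triangle $u_1v_1w$. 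Upgrading this to full edge-pancyclicity would proceed by induction on $|X|$: for any target length $\ell$ one uses the symmetric-exchange structure induced by niceness to extend or contract cycles through $uv$, systematically swapping coordinates in the current minimal difference against the indecomposability obstruction. Part~(i) then follows by contrapositive: bipartiteness of $G(X)$ descends to each factor $G(X_i)$ (a Cartesian product is bipartite iff every factor is bipartite), so no indecomposable factor has $\ge 3$ elements, forcing $|X_i|=2$ and $G(X_i)\cong K_2$; hence $G(X)\cong K_2^{\,\square k}=Q_k$.

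\textbf{Parts~(ii) and~(iii).} For (ii), the assumption that $D(u,v)$ is a $2$-valued component $C$ gives $G(X)\cong K_2\,\square\,G(X[E\setminus C])$, with $uv$ identified with an edge $(a,x)(b,x)$. Such an edge lies in no triangle, since a common neighbor would have to agree with one endpoint in the $K_2$-coordinate and with the other in all remaining coordinates, impossible after a single swap; hence $uv$ is not pancyclic. Non-bipartiteness of $G(X)$ passes to $G(X[E\setminus C])$, so by the structural result some factor of $X[E\setminus C]$ is indecomposable with $\ge 3$ elements, and edge-pancyclicity in that factor combined with Cartesian-product bookkeeping (in the spirit of Lemma~\ref{lem: square product by complete}(i)) furnishes cycles of every length $\ge 3$ through $x$ in $G(X[E\setminus C])$; concatenating with the $K_2$-edge produces cycles of every length $\ge 4$ through $uv$. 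For (iii), $D(u,v)\subseteq C_i$ but is not a $2$-valued component, which forces $|X_i|\ge 3$; the structural result then makes the corresponding edge pancyclic in $G(X_i)$, and a Cartesian-product pancyclicity lemma analogous to Lemma~\ref{lem: square product by complete}(ii) lifts this to pancyclicity of $uv$ in $G(X)$.

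\textbf{Main obstacle.} The hardest step is the structural result on indecomposable nice sets: both the triangle construction and, especially, its amplification to full edge-pancyclicity require exploiting the tight interplay between niceness (minimal differences are precisely polytope edges) and indecomposability (no separator hides the coordinate interaction). Once this result is in hand, the three parts of the theorem follow almost mechanically from the Cartesian-product decomposition and standard product-graph pancyclicity arguments.
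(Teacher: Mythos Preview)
The paper does not prove Theorem~\ref{thm:Naddef}; it is quoted from Naddef~\cite{Naddef1984} and used as a black box in the second proof of Theorem~\ref{thm:main2}. There is therefore no proof in the paper against which to compare your proposal.

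Independently of that, your proposal has a genuine gap precisely where you flag it. The Cartesian-product reduction along separators is reasonable, but the ``core structural result''---that every edge of an indecomposable nice $X$ with $|X|\ge 3$ is pancyclic---is the entire substance of the theorem once the reduction is done, and you do not prove it. Your triangle construction is not justified: from two overlapping minimal differences $T_1,T_2$ you assert that ``niceness then supplies a third vertex $w$'' with $D(u_1,w)$ and $D(v_1,w)$ both minimal, but niceness is only a statement about which pairs of \emph{existing} points of $X$ are joined by polytope edges; it does not manufacture new points of $X$. Some separate argument is needed to exhibit such a $w$, and nothing you have written does so. The subsequent ``induction on $|X|$'' for full edge-pancyclicity is not even sketched---``systematically swapping coordinates \ldots\ against the indecomposability obstruction'' is a hope, not an argument. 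In Naddef's paper this step is the bulk of the work and relies on a careful cycle-extension analysis specific to the $0$--$1$ polytope setting; it does not fall out of general principles, and your outline gives no indication of how the induction would actually run.
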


A graph is \emph{Hamiltonian-connected} if it contains a Hamiltonian path from $u$ to $v$ for every pair of distinct vertices $u$ and $v$.
Naddef and Pulleyblank~\cite{NP1984} proved the following.

\begin{theorem}[Naddef and Pulleyblank~\cite{NP1984}]
    \label{thm: NP}
    Let $X \subseteq \{0,1\}^E$.
    If $G(X)$ is not bipartite, then $G(X)$ is Hamiltonian-connected.
\end{theorem}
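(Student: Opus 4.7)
The plan is to proceed by induction on $|E|$. Non-bipartiteness of $G(X)$ forces $|X|\ge 3$ via an odd cycle, so the smallest interesting case is $|E|=2$ with $|X|=3$, where $G(X)\cong K_3$ is trivially Hamiltonian-connected; this anchors the induction.

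For the inductive step, given distinct $u,v\in X$ we seek a Hamiltonian $u$-$v$ path in $G(X)$. Fix a coordinate $e\in E$ on which $X$ is non-constant, and set $X_i=\{x\in X:x_e=i\}$ for $i\in\{0,1\}$. The edges of $G(X)$ decompose into edges of $G(X_0)$, edges of $G(X_1)$, and ``cross'' edges between $X_0$ and $X_1$ corresponding to pairs $(x,x')$ differing only in coordinate $e$; these cross edges form a matching $M_e$. After deleting coordinate $e$, each $X_i$ lives in $\{0,1\}^{E\setminus\{e\}}$, its polytope graph is $G(X_i)$, and this graph is an induced subgraph of $G(X)$, so the inductive hypothesis applies whenever $G(X_i)$ is non-bipartite.

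The Hamiltonian path is then assembled by a case analysis on the slice memberships of $u$ and $v$. If $u\in X_0$ and $v\in X_1$, pick a cross-edge $xx'\in M_e$ with $x\in X_0$, $x'\in X_1$, and concatenate a Hamiltonian $u$-$x$ path in $G(X_0)$ (from the inductive hypothesis) with a Hamiltonian $x'$-$v$ path in $G(X_1)$. If $u,v\in X_0$, pick two cross-edges $xx',yy'\in M_e$ and build a Hamiltonian path that goes $u\to x$ in $X_0$, crosses to $x'\in X_1$, traverses $X_1$ from $x'$ to $y'$, crosses back to $y\in X_0$, and concludes with a $y$-$v$ path in $X_0$. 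In both cases, the matching structure of $M_e$ restricts the choice of cross endpoints, so one must be able to select the cross-edges to be compatible with Hamiltonian paths having the prescribed endpoints in the slices.

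The main obstacle is the possibility that one of $G(X_0),G(X_1)$ is bipartite, which invalidates the inductive hypothesis for that slice. To handle this, I would first try to choose $e$ so that some odd cycle of $G(X)$ survives within a single slice (such a choice should exist because not every coordinate can separate every odd cycle of $G(X)$). Failing that, if a slice graph is forced to be bipartite---and hence, by Theorem~\ref{thm:Naddef}(i), a hypercube graph---I would directly exhibit the required Hamiltonian path in that hypercube by a Gray-code construction matching the prescribed endpoints of the cross-edges. The parity reconciliation made possible by an odd cycle in $G(X)$ is precisely what lets one realize the prescribed endpoint pattern, and this is the point at which the non-bipartiteness hypothesis becomes indispensable rather than merely convenient.
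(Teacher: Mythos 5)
The paper does not prove this statement; it is quoted as an external theorem of Naddef and Pulleyblank \cite{NP1984}, so there is no internal proof to compare against. Judged on its own merits, your proposal has the right overall shape --- induction on $|E|$, slicing the polytope along the hyperplanes $x_e=0$ and $x_e=1$, and gluing Hamiltonian paths of the two faces --- and this is indeed the skeleton of the Naddef--Pulleyblank argument. However, two of your structural claims are false, and the argument breaks at both points.

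First, the cross edges of $G(X)$ between $X_0$ and $X_1$ are \emph{not} the pairs differing only in coordinate $e$, and they do not form a matching. Take $X=\{(0,0),(0,1),(1,1)\}$: the convex hull is a triangle, so $(0,0)(1,1)$ is an edge of $G(X)$ even though these vertices differ in both coordinates, and $(1,1)$ has two cross neighbours with respect to the first coordinate. In the application this paper cares about the situation is extreme: for an even delta-matroid every edge of $G(X_M)$ joins vertices at Hamming distance exactly $2$, so \emph{no} edge of $G(X_M)$ joins vertices differing in a single coordinate, and your matching $M_e$ would be empty. The fact that is actually available (and is what Naddef and Pulleyblank use) is polyhedral: every vertex of $X_0$ has at least one $G(X)$-neighbour in $X_1$, because from any vertex not maximizing the linear functional $x\mapsto x_e$ there is an improving edge of the polytope. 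So the cross edges form a bipartite graph with no isolated vertices, not a matching, and the gluing argument has to be rebuilt on that weaker but correct foundation.

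Second, in the case $u,v\in X_0$ your construction requires the $u$--$x$ segment and the $y$--$v$ segment to \emph{partition} $X_0$ into two vertex-disjoint paths with four prescribed endpoints; that is strictly stronger than Hamilton-connectivity of $G(X_0)$ and is not delivered by your inductive hypothesis. You cannot always escape this case by splitting on a coordinate where $u$ and $v$ differ, because you simultaneously want to choose $e$ so that an odd cycle survives inside a slice, and these two requirements can conflict. Finally, your fallback appeal to Theorem~\ref{thm:Naddef}(i) to identify a bipartite slice with a hypercube is stated in this paper only for \emph{nice} sets, whereas the general statement that a bipartite $(0,1)$-polytope graph is a hypercube is itself one of the results of \cite{NP1984} that would need proof here. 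As written, the proposal is not a complete proof.
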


Now we construct a subset of $\{0,1\}^E$ from an even delta-matroid and deduce corollaries of the above theorems.
For $S\subseteq E$, the \emph{incidence vector} is the vector $x_S\in\{0,1\}^E$ such that $x_S(a)=1$ if $a\in S$ and $x_S(a)=0$ otherwise.
Note that $D(x_S, x_{S'}) = S\symdiff S'$ for $S,S'\subseteq E$.
For a delta-matroid $M$, let $X_M$ be the set of incidence vectors of bases of $M$.

The following lemma is to show that $X_M$ satisfies the requirement of Theorem~\ref{thm:Naddef}.

\begin{lemma}\label{lem:nice}
    For an even delta-matroid $M$, the set $X_M$ is nice.
\end{lemma}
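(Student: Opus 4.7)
My plan is to prove the contrapositive of the nontrivial direction; the easy direction (minimality $\Rightarrow$ edge) was already noted in the paragraph preceding the lemma. That is, for $B, B' \in \cB(M)$ with $B \ne B'$, I will show that if $D(x_B, x_{B'}) = B \symdiff B'$ is \emph{not} a minimal member of $\{B_1 \symdiff B_2 : B_1, B_2 \in \cB(M),\, B_1 \ne B_2\}$, then $x_B x_{B'}$ is not an edge of $G(X_M)$.

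First I would translate non-minimality into a clean combinatorial condition. Because $M$ is even, every nonempty symmetric difference between bases has even size, so its cardinality is at least $2$. Applying \ref{sea'} to any distinct pair of bases produces a third base at symmetric-difference distance $2$, so the value $2$ is always attained. Hence $B \symdiff B'$ fails to be minimal if and only if $|B \symdiff B'| \ge 4$, and this is the only case I need to handle.

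The core of the argument uses \ref{sea'} to exhibit a second diagonal of the segment $[x_B, x_{B'}]$ inside $\conv(X_M)$. Picking any $e \in B \symdiff B'$ and applying \ref{sea'} gives some $f \in (B \symdiff B') \setminus \{e\}$ such that both $B_3 := B \symdiff \{e,f\}$ and $B_4 := B' \symdiff \{e,f\}$ lie in $\cB(M)$. Since each of $e, f$ belongs to exactly one of $B, B'$, a quick coordinatewise check shows $x_B + x_{B'} = x_{B_3} + x_{B_4}$: at coordinates outside $\{e,f\}$ nothing changes, while at each of $e, f$ the sum of the two indicators is $1$ on both sides. The inequality $|B \symdiff B'| \ge 4$ rules out $B_3 \in \{B, B'\}$, so $\{B_3, B_4\} \ne \{B, B'\}$, and $B \ne B'$ forces $B_3 \ne B_4$. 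Therefore $\tfrac{1}{2}(x_B + x_{B'}) = \tfrac{1}{2}(x_{B_3} + x_{B_4})$ is the midpoint of a distinct segment between two vertices of $\conv(X_M)$, so $[x_B, x_{B'}]$ cannot be a $1$-face and $x_B x_{B'} \notin E(G(X_M))$.

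The point I expect to need some care is using the correct form of the exchange axiom: \ref{sea} alone only delivers $B \symdiff \{e, f\} \in \cB(M)$, which is not enough to produce the vector identity $x_B + x_{B'} = x_{B_3} + x_{B_4}$. The argument genuinely requires Wenzel's strengthening \ref{sea'} so that the \emph{same} pair $\{e, f\}$ works simultaneously for $B$ and for $B'$, and this is exactly the feature that holds because $M$ is even.
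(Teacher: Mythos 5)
Your proposal is correct, but it takes a genuinely different route from the paper's. The paper proves the nontrivial direction (edge $\Rightarrow$ minimal) by invoking Chepoi's identification of $G(X_M)$ with $BG(M)$ (Theorem~\ref{thm:dGGMS}), so that every edge automatically satisfies $|B\symdiff B'|=2$ and is therefore minimal; its remaining \ref{sea}-based computation shows that minimal difference sets are exactly those of size two. You instead argue the contrapositive directly inside the polytope: for $|B\symdiff B'|\ge 4$, axiom \ref{sea'} produces $B_3=B\symdiff\{e,f\}$ and $B_4=B'\symdiff\{e,f\}$ in $\cB(M)$ with $x_B+x_{B'}=x_{B_3}+x_{B_4}$ (this identity does require \ref{sea'} rather than \ref{sea}, as you note), and $B_3\notin\{B,B'\}$ since $|B_3\symdiff B'|=|B\symdiff B'|-2\ge 2$; since every point of a subset of $\{0,1\}^E$ is a vertex of its convex hull, a face containing the common midpoint would have to contain $x_{B_3}$ and $x_{B_4}$ as well, so $[x_B,x_{B'}]$ is not a $1$-face. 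All steps check out, including the reduction of non-minimality to $|B\symdiff B'|\ge 4$ via evenness. What your approach buys is self-containedness: combined with the general fact, stated before the lemma, that minimal difference sets yield edges, your argument does not consume Theorem~\ref{thm:dGGMS} at all and in fact re-derives its statement as a corollary (edges of $G(X_M)$ are exactly the pairs with $|B\symdiff B'|=2$). The only point left implicit is the routine supporting-functional argument showing that a midpoint shared with a distinct vertex pair obstructs being an edge.
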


To prove this lemma, we will use the following theorem about basis graphs of even delta-matroids.

\begin{theorem}[Chepoi~\cite{Chepoi2007}]\label{thm:dGGMS}
    For an even delta-matroid $M$, the basis graph $BG(M)$ is identical to $G(X_M)$.
\end{theorem}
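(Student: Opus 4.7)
The plan is to verify that $BG(M)$ and $G(X_M)$ share the same edge set under the obvious bijection of their vertex sets $\cB(M) \leftrightarrow X_M$, $B \mapsto x_B$. Throughout I rely on the standard polytope fact that a pair of vertices $\{u, v\}$ of $\conv(V)$ spans an edge if and only if there is a linear functional on $\bR^E$ maximized on $V$ exactly at $\{u, v\}$; equivalently, the midpoint $(u+v)/2$ is not a convex combination of vertices in $V \setminus \{u, v\}$.

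For the forward direction, suppose $B_1, B_2 \in \cB(M)$ with $B_1 \symdiff B_2 = \{e, f\}$. Consider the linear functional
\[
    \ell(z) := \sum_{i \in B_1 \cap B_2} z_i \;-\; \sum_{i \in E(M) \setminus (B_1 \cup B_2)} z_i.
\]
Among incidence vectors $x_A$ with $A \in \cB(M)$, $\ell$ attains its maximum $|B_1 \cap B_2|$ exactly when $B_1 \cap B_2 \subseteq A \subseteq B_1 \cup B_2$, equivalently when $A = (B_1 \cap B_2) \cup S$ for some $S \subseteq \{e, f\}$. Of the four such candidate subsets, two are $B_1$ and $B_2$; in every case a short check shows that the other two have cardinality of opposite parity to $|B_1| = |B_2| \pmod 2$. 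Since $M$ is even, these two cannot be bases. Hence the face of $\conv(X_M)$ maximizing $\ell$ is exactly $[x_{B_1}, x_{B_2}]$, so $x_{B_1} x_{B_2}$ is an edge of $G(X_M)$.

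For the backward direction, suppose $|B_1 \symdiff B_2| \ge 4$; note that $|B_1 \symdiff B_2|$ is automatically even because $M$ is. Pick any $e \in B_1 \symdiff B_2$ and apply \ref{sea'} to find $f \in (B_1 \symdiff B_2) \setminus \{e\}$ such that $B_1' := B_1 \symdiff \{e, f\}$ and $B_2' := B_2 \symdiff \{e, f\}$ are both bases of $M$. Since $B_1 \symdiff B_2 \neq \{e, f\}$, neither of $B_1', B_2'$ lies in $\{B_1, B_2\}$. Each of $e$ and $f$ lies in exactly one of $B_1, B_2$, so flipping both in both sets leaves the sum unchanged on the $\{e, f\}$-coordinates (and trivially elsewhere), giving $x_{B_1'} + x_{B_2'} = x_{B_1} + x_{B_2}$. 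Thus the midpoint of $x_{B_1}, x_{B_2}$ is a convex combination of vertices in $X_M \setminus \{x_{B_1}, x_{B_2}\}$, so $x_{B_1} x_{B_2}$ is not an edge of $G(X_M)$.

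The crux is the forward direction, where one must recognize the correct linear functional and observe that the parity constraint built into the definition of an even delta-matroid is precisely what eliminates the two non-base candidates among $\{(B_1 \cap B_2) \cup S : S \subseteq \{e, f\}\}$. Both the supporting hyperplane construction and the single application of \ref{sea'} in the backward direction are then routine.
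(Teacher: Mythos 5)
The paper does not prove this statement at all: it is quoted as an external result of Chepoi~\cite{Chepoi2007}, so there is no internal proof to compare against. Your argument is a correct, self-contained, and elementary proof, and it is essentially the standard one: the forward direction exhibits a supporting functional whose maximizing face is exactly $\{x_{B_1},x_{B_2}\}$ (with evenness killing the two parity-violating candidates $(B_1\cap B_2)\cup S$), and the backward direction uses \ref{sea'} to produce $x_{B_1'}+x_{B_2'}=x_{B_1}+x_{B_2}$ with $B_1',B_2'\notin\{B_1,B_2\}$, so the midpoint lies in the hull of the remaining vertices. Both halves check out, including the cardinality bookkeeping in the case $e,f\in B_1\setminus B_2$.

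One small imprecision worth fixing: your preliminary remark asserts that ``there is a functional maximized on $V$ exactly at $\{u,v\}$'' is \emph{equivalent} to ``$(u+v)/2\notin\conv(V\setminus\{u,v\})$.'' For a general polytope these are not equivalent; e.g., for $V=\{(0,0),(4,0),(1,1),(1,-1)\}$ the pair $u=(0,0)$, $v=(4,0)$ is non-adjacent yet $(2,0)\notin\conv\{(1,1),(1,-1)\}$. The correct midpoint criterion is that \emph{every} convex representation of $(u+v)/2$ by vertices is supported on $\{u,v\}$. This does not affect your proof, since you only use the two valid implications: existence of such a functional implies adjacency (forward direction), and membership of the midpoint in $\conv(V\setminus\{u,v\})$ implies non-adjacency (backward direction). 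I would simply restate the two criteria as one-directional sufficient conditions rather than as an equivalence.
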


\begin{proof}[Proof of Lemma~\ref{lem:nice}]
    By Theorem~\ref{thm:dGGMS}, for every edge $x_Bx_{B'}$ of $G(X_M)$, $D(x_B,x_{B'}) = B\symdiff B'$ is a $2$-element subset and so it is minimal among all $D(x_{A},x_{A'})$ with distinct bases $A$, $A'$ of $M$.

    Conversely, suppose that $B$ and $B'$ of $M$ are distinct bases such that $D(x_{B},x_{B'})$ is minimal.
    Let $x\in B\symdiff B'$.
    By \ref{sea}, there is $y\in B\symdiff B'$ such that $B\symdiff \{x,y\}$ is a base of $M$.
    Since $M$ is even, $x\neq y$.
    Observe that $D(x_{B\symdiff \{x,y\}},x_{B'})=D(x_B,x_{B'})\setminus\{x,y\}$ and therefore by the minimality, $B'=B\symdiff \{x,y\}$. This implies that $\lvert{B\symdiff B'}\rvert=2$.
\end{proof}

First, let us see a consequence of Theorem~\ref{thm:Naddef}, which will give us another proof of Theorem~\ref{thm:main2}.
For an even delta-matroid $M$, the separators of $M$ are precisely the separators of $X_M$, and for $r=1,2$, a component $C$ of $X_M$ is $r$-valued if and only if $|C|=r$.
Therefore, we deduce Theorem~\ref{thm:main2} by Theorem~\ref{thm:Naddef} and Lemma~\ref{lem:nice} along with the following lemma, immediately implied by Theorem~\ref{thm:main-strong}\ref{main2}.
We provide a direct proof of the lemma.

\begin{lemma}\label{lem: nonbip}
    If an even delta-matroid $M$ has a component of size at least three, then $BG(M)$ is not bipartite.
\end{lemma}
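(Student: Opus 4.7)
My goal is to exhibit an odd cycle in $BG(M)$; once one is found the graph is not bipartite. The first step is a reduction to the connected case. Writing $M = N \oplus M'$ where $E(N)$ is the component of size at least three, Lemma~\ref{lem:components} gives $BG(M) \cong BG(N)\square BG(M')$. Fixing any base $B_0'$ of $M'$, the vertex set $\{B\cup B_0' : B\in \cB(N)\}$ induces a copy of $BG(N)$ inside $BG(M)$, so it suffices to prove the following: \emph{if $N$ is a connected even delta-matroid with $|E(N)|\ge 3$, then $BG(N)$ is not bipartite.}

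I would prove this reduced statement by induction on $|E(N)|$. For the base case $|E(N)|=3$, Lemma~\ref{lem:conn3} tells us that after a twisting (which does not change $BG(N)$ up to isomorphism), $\cB(N)$ is either $\{\{1\},\{2\},\{3\}\}$ or $\{\{1\},\{2\},\{3\},\{1,2,3\}\}$. In both cases the three singleton bases pairwise differ by two elements, so they form a triangle in $BG(N)$.

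For the inductive step $|E(N)|\ge 4$, I would pick any $x\in E(N)$. Because $N$ is connected and $|E(N)|\ge 2$, Lemma~\ref{lem:sep} rules out $x$ being a loop or a coloop: either would make $\{x\}$ a separator, contradicting the fact that $E(N)$ itself is the unique minimal nonempty separator. Theorem~\ref{thm: connectivity reduction} then furnishes a choice in $\{N\setminus x, N/x\}$ that is a connected even delta-matroid on $|E(N)|-1\ge 3$ elements; by the inductive hypothesis its basis graph contains an odd cycle. Lemma~\ref{lem:del and cont} identifies this basis graph with an induced subgraph of $BG(N)$, so the odd cycle lifts to an odd cycle of $BG(N)$, completing the induction.

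The argument is essentially bookkeeping with results already available. The subtlest point, and the only candidate for an obstacle, is making sure the induction can always shed one element; this rests on combining Theorem~\ref{thm: connectivity reduction} with the observation that in a connected even delta-matroid on at least two elements no element is a loop or coloop, which is exactly what allows Lemma~\ref{lem:del and cont} to be applied at each step.
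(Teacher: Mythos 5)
Your proposal is correct and follows essentially the same route as the paper: the paper's one-line proof extracts a connected minor of size three by repeatedly applying Theorem~\ref{thm: connectivity reduction}, obtains a triangle via Lemma~\ref{lem:conn3}, and embeds it using Lemma~\ref{lem:del and cont}, which is exactly the induction you spell out. Your version merely makes explicit the reduction to the connected case and the check that no element of a connected delta-matroid on at least two elements is a loop or coloop, both of which are sound.
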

\begin{proof}
    If $M$ has a component of size at least three, then it has a connected minor of size three and thus $BG(M)$ has a triangle by Lemmas~\ref{lem:conn3} and~\ref{lem:del and cont}.
\end{proof}

Second, let us see an implication of Theorem~\ref{thm: NP}.
If an even delta-matroid has a component of size at least three, then its basis graph is non-bipartite by Lemma~\ref{lem: nonbip}.
Therefore, we deduce the following corollary.

\begin{corollary}\label{cor:Hamconn}
    Let $M$ be an even delta-matroid.
    If $M$ has a component of size larger than two, then $BG(M)$ is Hamiltonian-connected.
    \qed
\end{corollary}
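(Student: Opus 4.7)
The plan is to assemble the corollary from two ingredients already in hand: the identification of $BG(M)$ with the polytope-edge graph $G(X_M)$, and the Hamiltonian-connectedness theorem of Naddef and Pulleyblank. Concretely, let $X_M \subseteq \{0,1\}^{E(M)}$ denote the set of incidence vectors of the bases of $M$, as defined just before Lemma~\ref{lem:nice}. By Theorem~\ref{thm:dGGMS}, $BG(M)$ is identical to $G(X_M)$, so Theorem~\ref{thm: NP} applies to $BG(M)$ verbatim, provided we can check that $G(X_M)$ is not bipartite.

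The non-bipartiteness is exactly what Lemma~\ref{lem: nonbip} supplies: since $M$ has a component of size at least three, $BG(M)$ contains a triangle and in particular is not bipartite. Plugging this into Theorem~\ref{thm: NP} gives that $G(X_M)=BG(M)$ is Hamiltonian-connected, which is the desired conclusion. First I would state the identification $BG(M)=G(X_M)$, then invoke Lemma~\ref{lem: nonbip} for non-bipartiteness, and finally cite Theorem~\ref{thm: NP} to finish.

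There is no substantive obstacle; the corollary is essentially a one-line deduction, which is why the statement is given with a \qed on the same line. The only conceptual point worth remarking is that Theorem~\ref{thm: NP} is stated for arbitrary $X\subseteq \{0,1\}^E$ with no niceness hypothesis, so unlike the second proof of Theorem~\ref{thm:main2} we do not need to invoke Lemma~\ref{lem:nice} here; mere non-bipartiteness of $G(X_M)$ suffices.
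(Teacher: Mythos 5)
Your proposal is correct and matches the paper's own deduction exactly: the paper likewise combines Theorem~\ref{thm:dGGMS} (to identify $BG(M)$ with $G(X_M)$), Lemma~\ref{lem: nonbip} (for non-bipartiteness), and Theorem~\ref{thm: NP} to conclude. Your remark that no niceness hypothesis is needed here is also accurate.
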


\section{Hamiltonian cycles using $\alpha$ and avoiding $\beta$}\label{sec:inex}

We will prove Theorem~\ref{thm:Ham}, which states that 
given an even delta-matroid $M$ and distinct edges $B_1B_2$ and $B_2B_3$ of $BG(M)$, the basis graph $BG(M)$ has a Hamiltonian cycle using $B_1B_2$ and avoiding $B_2B_3$ unless $BG(M)$ is isomorphic to one of $K_1$, $K_2$, $K_3$, and $C_4$.
We first show the following stronger result when $B_1B_2B_3$ forms a triangle in $BG(M)$.

\begin{proposition}\label{prop:inex2}
    Let $M$ be an even delta-matroid.
    If $B_1B_2B_3$ is a triangle in $BG(M)$, then for each $4 \le k \le |V(BG(M))|$, $BG(M)$ has a length-$k$ cycle using $B_1B_2$ and avoiding $B_2B_3$.
\end{proposition}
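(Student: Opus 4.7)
The plan is to prove Proposition~\ref{prop:inex2} by induction on $|E(M)|$, paralleling the first proof of Theorem~\ref{thm:main2} while tracking the additional forbidden edge $B_2B_3$. After twisting we may take $B_2 = \emptyset$, $B_1 = \{e,f\}$, and $B_3 = \{f,h\}$ for three distinct elements; by Lemma~\ref{lem:equiv} these lie in a single component $X$ of $M$ with $|X| \ge 3$. The triangle hypothesis also supplies the auxiliary edge $B_1 B_3$, which will be used as a substitute for $B_2 B_3$ in several constructions.

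The base case is $|E(M)| = 3$, where $M$ is connected on $\{e,f,h\}$ and Lemma~\ref{lem:conn3} forces $BG(M) \cong K_3$ (vacuous, since no $k \ge 4$) or $BG(M) \cong K_4$, in which the $4$-cycle $B_1 B_2 B_4 B_3 B_1$ through the fourth base $B_4$ does the job.

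For the inductive step I distinguish two cases. First, if $X \subsetneq E(M)$, write $M = N \oplus M'$ with $E(N) = X$; then $BG(M) \cong BG(N) \,\square\, BG(M')$ by Lemma~\ref{lem:components} and the triangle lives in a single $BG(N)$-fiber. The inductive hypothesis applied to $N$ (which has $|E(N)| < |E(M)|$) yields cycles of every length $4 \le k' \le |V(BG(N))|$ in $BG(N)$ through $B_1 B_2$ avoiding $B_2 B_3$, and I extend them to every length $4 \le k \le |V(BG(M))|$ in $BG(M)$ by weaving in paths from $BG(M')$, using Hamiltonian-connectedness from Corollary~\ref{cor:Hamconn} when $M'$ has a component of size at least three, and Lemma~\ref{lem:cube} otherwise. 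Second, if $M$ is connected and $|X| \ge 4$, pick $a \in X \setminus \{e,f,h\}$; such $a$ is neither a loop nor a coloop since $|X| \ge 2$, the triangle $B_1 B_2 B_3$ survives in $BG(M\setminus a)$, and the inductive hypothesis on $M\setminus a$ directly supplies the desired cycles for $4 \le k \le |V(BG(M\setminus a))|$. For longer $k$, I splice a detour through the $a$-containing side by replacing an edge $uv$ of a short inductive cycle (chosen away from $B_2B_3$) with a path $u, w_1, \ldots, w_m, v$ whose interior consists of $a$-containing bases; detours of arbitrary intermediate length exist from the Hamiltonian-connectedness of $BG(M/a)$ (applicable when $M/a$ has a component of size at least three, and otherwise replaced by the cube structure).

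The principal obstacle is this gluing step in the $|X| \ge 4$ case: calibrating the detour length so that the full cycle has length exactly $k$, while choosing the splicing edge $uv$ so that it neither coincides with $B_1 B_2$ nor reintroduces $B_2 B_3$. This refines Lemma~\ref{lem: larger pancyclic} to track a forbidden edge in addition to a required one, and the boundary cases where $|V(BG(M\setminus a))|$ is small, or where the only convenient splicing edges are adjacent to $B_2$, require individual treatment; in those cases the auxiliary triangle edge $B_1 B_3$ provides the flexibility to reroute without using $B_2B_3$.
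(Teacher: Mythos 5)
Your overall strategy (split $BG(M)$ along an element via deletion/contraction and splice cycles across the two sides) is the right family of ideas, but the step you yourself flag as ``the principal obstacle'' is exactly where the proof lives, and your plan for it does not go through as described. In your connected case you delete an element $a\notin\{e,f,h\}$, so the whole triangle stays on the $a$-free side, and you must then both find, for each inductive cycle, a splicing edge $uv\ne B_1B_2$ that admits a straddling $4$-cycle into the $a$-containing side, and produce detours of every intermediate length between the two resulting entry points. Neither is available from the tools you cite: Corollary~\ref{cor:Hamconn} gives Hamiltonian-connectedness, not paths of every length between two prescribed vertices, and a straddling $4$-cycle at a prescribed edge is not automatic --- the paper needs a dedicated claim, proved with \ref{sea'} and Lemma~\ref{lem:ortho}, to produce even one such $4$-cycle at one carefully chosen location. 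A similar unproved ``weaving'' step underlies your direct-sum case: obtaining cycles of every length through one prescribed edge while avoiding a second prescribed edge in $BG(N)\,\square\,BG(M')$ is not covered by Lemma~\ref{lem: square product by complete} or Lemma~\ref{lem:cube}.

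The paper sidesteps this bookkeeping with a different choice of splitting element. After twisting so that $B_1=\{e\}$, $B_2=\{f\}$, $B_3=\{g\}$, it splits along $f$, which lies in \emph{both} $B_1\symdiff B_2$ and $B_2\symdiff B_3$; thus both edges of interest become crossing edges of the partition into $f$-containing and $f$-free bases. It then constructs a single straddling $4$-cycle $B_1B_2A_2A_1$ (the technical Claim), deletes every other crossing edge --- in particular $B_2B_3$ --- and applies Lemma~\ref{lem: larger pancyclic} together with Theorem~\ref{thm:main2} and Theorem~\ref{thm: connectivity reduction} to the two sides to make the crossing edge $B_1B_2$ almost pancyclic in the reduced graph, so avoidance of $B_2B_3$ is automatic rather than tracked through an induction; a separate symmetry argument handles the degenerate case $N=N\symdiff\{e,f\}$. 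I recommend reorganizing your argument around this choice of splitting element; as written, the proposal is a plan whose central step is unresolved.
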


\begin{proof}
    We may assume that $|V(BG(M))| \ge 4$.
    As $B_1B_2B_3$ is a triangle in $BG(M)$, $B_2 = B_1\symdiff\{e,f\}$ and $B_3 = B_1\symdiff\{e,g\}$ for some $e,f,g\in E(M)$.
    By twisting on $B_1\symdiff\{e\}$, we may assume that $B_1=\{e\}$, $B_2=\{f\}$, and $B_3=\{g\}$.
    By Lemma~\ref{lem:equiv}, there is a component $X$ of $M$ containing $\{e,f,g\}$, and then we can write $M = N \oplus M'$ such that $E(N) = X$.

    Suppose that $N = N\symdiff\{e,f\}$.
    Then $M = M\symdiff\{e,f\}$.
    Hence if $M$ has a length-$k$ cycle containing $B_1B_2$ and $B_2B_3$ with $k\ge 4$, then it has another length-$k$ cycle containing $B_2B_1$ and $B_1B_3$, which does not contain $B_2B_3$.
    Therefore, $M$ has a length-$k$ cycle using $B_1B_2$ and avoiding $B_2B_3$ for each $4\le k\le |V(BG(M))|$.

    Now we assume that $N \ne N\symdiff\{e,f\}$.
    Then $|X| > 3$ by Lemma~\ref{lem:conn3}.
    Since $M \ne M\symdiff\{e,f\}$, $M$ has a minimal base $A$ such that $|A|\ge 3$ and $A$ contains $e$ or $f$.
    Then by~\ref{sea}, $|A|=3$.
    If $A$ does not contain $e$, then by applying~\ref{sea'} to $\{e\}$ and $A$, we obtain a base $A'$ such that $e\in A'$ and $|A'| = |A|$.
    Thus, we may assume that $e\in A$.

    \begin{claim}
        There are bases $A_1$ and $A_2$ of $M$ such that
    \begin{enumerate}[label=\rm(a\arabic*)]
        \item $f\notin A_1$ and $f\in A_2$, and
        \item $B_1B_2A_2A_1$ is a length-$4$ cycle in $BG(M)$.
    \end{enumerate}
    \end{claim}

    \begin{subproof}
    Suppose that $f\notin A$.
    By the exchange axiom~\ref{sea} applied to $A$ and $\{f\}$, we deduce that $A\symdiff\{x,f\}$ is a base for some $x\in A$.
    Then we can take $A_1 = A$ and $A_2 = A\symdiff\{x,f\}$.

    Therefore, we may assume that $e,f\in A$.
    Then $A=\{e,f,h\}$ for some $h\in E(M) \setminus\{e,f\}$.
    If $h=g$, then %
    we can take $A_1 = \{g\}$ and $A_2 = \{e,f,g\}$.
    
    Thus, we may assume that $h\ne g$.
    Then by~\ref{sea} applied to 
    $\{e,f,h\}$ and $B_3 = \{g\}$,
    there is $z\in \{e,f,h\}$ such that 
    $\{e,f,g,h\}\setminus \{z\}$ is a base of $M$.
    If $z=e$ or $h$, then we can take $A_1=\{g\}$ and $A_2=\{e,f,g,h\}\symdiff\{z\}$.
    If $z=f$, then we can take $A_1=\{e,g,h\}$ and $A_2=\{e,f,h\}$.
    \end{subproof}

    If $M\setminus f$ has a component of size at least three, then by Theorem~\ref{thm:main2}, the edge $B_1A_1$ is almost pancyclic in $BG(M\setminus f)$ and so even pancyclic.
    If $M\setminus f$ has no component of size larger than two, then by Lemma~\ref{lem:cube}, $B_1A_1$ is even pancyclic in $BG(M\setminus f)$.
    Similarly, $(B_2\setminus\{f\})(A_2\setminus\{f\})$ is even pancyclic in $BG(M/ f)$.
    By Theorem~\ref{thm: connectivity reduction}, $N\setminus f$ or $N/f$ is connected, and so $M\setminus f$ or $M/f$ has a component $X\setminus\{f\}$ of size at least three.
    Then by Theorem~\ref{thm:main2}, $B_1A_1$ is pancyclic in $BG(M\setminus f)$ or $(B_2\setminus\{f\})(A_2\setminus\{f\})$ is pancyclic in $BG(M/f)$.
    Let $F:=\{BB' \in E(BG(M)) : f\in B\symdiff B'\} \setminus \{B_1B_2, A_1A_2\}$.
    Then $B_2B_3 \in F$ and $BG(M)$ is isomorphic to a graph obtained from the disjoint union of $BG(M\setminus f)$ and $BG(M/ f)$ by adding two edges corresponding to $B_1B_2$ and $A_1A_2$.
    By Lemma~\ref{lem: larger pancyclic} applied to $BG(M) \setminus F$, the edge $B_1B_2$ is almost pancyclic in $BG(M)\setminus F$ and thus $BG(M)$ has a length-$k$ cycle using $B_1B_2$ and avoiding $B_2B_3$ for each $4\le k\le |V(BG(M))|$.
\end{proof}

We cannot omit the condition that $B_1B_2B_3$ is a triangle in Proposition~\ref{prop:inex2}, because the basis graph in Figure~\ref{fig:W4} does not have a length $4$-cycle that uses $\{a,b\}\{a,c\}$ and avoids $\{a,c\}\{c,d\}$.
We also note that this basis graph has a unique triangle containing an edge $\{a,b\}\{b,c\}$ and thus $k$ cannot be $3$ in Proposition~\ref{prop:inex2}.

Finally, we prove Theorem~\ref{thm:Ham}.

\begin{proof}[Proof of Theorem~\ref{thm:Ham}]
    Let $B_1,B_2,B_3$ be the vertices of $BG(M)$ such that
    $\alpha = B_1B_2$ and $\beta = B_2B_3$.
    By twisting, we may assume that $B_2 = \emptyset$.
    Then $B_1 = \{e,f\}$ and $B_3 = \{g,h\}$ for some elements $e,f,g,h \in E$.
    By Proposition~\ref{prop:inex2}, we can assume that $\{e,f\}$ and $\{g,h\}$ are disjoint.

    Let $G_1$ be the subgraph of $BG(M)$ induced by the set of all bases of $M$ containing $e$, and let $G_2$ be the subgraph of $BG(M)$ induced by the set of all bases of $M$ not containing $e$.
    Then 
    $G_1 \cong BG(M/e)$ and $G_2 = BG(M\setminus e)$.

    By~\ref{sea'} applied to $B_1$ and $B_3$, at least one of the following holds:
    \begin{enumerate}[label=\rm(a\arabic*)]
        \item $\{e,g\}$ and $\{f,h\}$ are bases.
        \item $\{e,h\}$ and $\{f,g\}$ are bases.
        \item $\emptyset$ and $\{e,f,g,h\}$ are bases.
    \end{enumerate}
    By exchanging $g$ and $h$ if necessary, we may assume that (a1) or (a3) holds.
    Suppose (a1) holds.
    Then we observe that $B_1B_2B_3\{e,g\}$ is a length-$4$ cycle in $BG(M)$.
    By Theorem~\ref{thm:main2} and Lemma~\ref{lem:cube}, $G_1$ has a Hamiltonian path $P_1$ from $B_1=\{e,f\}$ to $\{e,g\}$ and $G_2$ has a Hamiltonian path $P_2$ from $B_2=\emptyset$ to $B_3=\{g,h\}$.
    Because $B_2, B_3, \{f,h\}\in  V(G_2)$, the path $P_2$ does not contain the edge $B_2B_3$.
    Then the edge set $E(P_1)\cup E(P_2) \cup \{B_1B_2, B_3\{e,g\}\}$ forms a Hamiltonian cycle of $BG(M)$, which does not contain $B_2B_3$.

    Therefore, we can assume (a3) holds.
    Denote $B_4 := \{e,f,g,h\}$.
    Since $BG(M) \not\cong C_4$, $M$ has a base other than $B_i$ with $1\le i\le 4$.
    Suppose that $M$ has a base $A$ such that $\{e,f\} \not\subseteq A$, and let $x\in \{e,f\}\setminus A$.
    By symmetry between $e$ and $f$, we may assume that $x=e$.
    By Theorem~\ref{thm:main2} and Lemma~\ref{lem:cube}, $G_1$ has a Hamiltonian path $P_1$ from $B_1$ to $B_4$ and $G_2$ has a Hamiltonian path $P_2$ from $B_2$ to $B_3$.
    As $B_2,B_3,A\in V(G_2)$,
    we have that $B_2B_3 \notin E(P_2)$ and thus $E(P_1)\cup E(P_2)\cup \{B_1B_2,B_3B_4\}$ is a Hamiltonian cycle of $M$ which avoids $B_2B_3$.
    So we may assume that every base other than $B_2$ and $B_3$ contains $\{e,f\}$.
    Let $A'$ be a base of $M$ such that $\{e,f\} \subseteq A' \notin \{B_1,B_4\}$.
    By~\ref{sea}, there is $y \in (A' \symdiff B_2) \setminus \{e\}$ such that $A'\symdiff\{e,y\}$ is a base of $M$.
    Then $A'\symdiff\{e,y\}$ does not contain $e$ and it is neither $B_2$ nor $B_3$, a contradiction.
\end{proof}

\section{Acknowledgements}
The authors would like to thank the anonymous reviewers for their valuable comments and suggestions.
Especially, Lemma~\ref{lem:equiv}\ref{item:equiv3} and Corollary~\ref{cor:Hamconn} were suggested by the reviewers.

\providecommand{\bysame}{\leavevmode\hbox to3em{\hrulefill}\thinspace}
\providecommand{\MR}{\relax\ifhmode\unskip\space\fi MR }
\providecommand{\MRhref}[2]{%
  \href{http://www.ams.org/mathscinet-getitem?mr=#1}{#2}
}
\providecommand{\href}[2]{#2}

\end{document}